\newtheorem{theorem}{Theorem}[section]
\newtheorem{prop}[theorem]{Proposition}
\newtheorem{lemma}[theorem]{Lemma}
\newtheorem{prop-def}{Proposition-Definition}[section]
\theoremstyle{definition}
\newtheorem{exam}[theorem]{Example}
\newcommand{\nc}{\newcommand}
\nc{\delete}[1]{{}}
\nc{\mmargin}[1]{}
\nc{\mlabel}[1]{\label{#1}}  
\nc{\mcite}[1]{\cite{#1}}  
\nc{\mref}[1]{\ref{#1}}  
\nc{\meqref}[1]{\eqref{#1}} 
\nc{\mbibitem}[1]{\bibitem{#1}} 
\nc{\mlabel}[1]{\label{#1}  
{\hfill \hspace{1cm}{\bf{{\ }\hfill(#1)}}}}
\nc{\mcite}[1]{\cite{#1}{{\bf{{\ }(#1)}}}}  
\nc{\mref}[1]{\ref{#1}{{\bf{{\ }(#1)}}}}  
\nc{\meqref}[1]{\eqref{#1}{{\bf{{\ }(#1)}}}} 
\nc{\mbibitem}[1]{\bibitem[\bf #1]{#1}} 
\nc{\shadow}{phantom\xspace}
\nc{\Shadow}{Phantom\xspace}
\nc{\shad}{\theta}
\nc{\tforall}{\text{ for all }}
\nc{\ddiff}{D-\text{differential}\xspace}
\nc{\oplin}{operator linear\xspace}
\nc{\fopav}{F_A(\Omega,V)}
\nc{\wtd}{weighted\xspace}
\nc{\wt}{weight\xspace}
\nc{\wte}{\lambda}
\nc{\coa}{\overline{\mathfrak{OA}}} 
\nc{\oa}{\mathfrak{OA}} 
\nc{\lc}{\lfloor} \nc{\rc}{\rfloor}
\nc{\free}[1]{\widetilde{#1}}
\nc{\Id}{\mathrm{Id}}
\nc{\lra}{\longrightarrow}
\nc{\vep}{\varepsilon}
\nc{\bin}[2]{ (_{\stackrel{\scs{#1}}{\scs{#2}}})}  
\nc{\binc}[2]{(\!\! \begin{array}{c} \scs{#1}\\
    \scs{#2} \end{array}\!\!)}  
\nc{\bincc}[2]{  ( {\scs{#1} \atop
    \vspace{-1cm}\scs{#2}} )}  
\nc{\bs}{\bar{S}}
\nc{\ra}{\longleftarrow}
\nc{\ot}{\otimes}
\nc{\rar}{\rightarrow}
\nc{\dar}{\downarrow}
\nc{\dap}[1]{\downarrow \rlap{$\scriptstyle{#1}$}}
\nc{\defeq}{\stackrel{\rm def}{=}}
\nc{\dis}[1]{\displaystyle{#1}}
\nc{\dotcup}{\ \displaystyle{\bigcup^\bullet}\ }
\nc{\hcm}{\ \hat{,}\ }
\nc{\hts}{\hat{\otimes}}
\nc{\hcirc}{\hat{\circ}}
\nc{\lleft}{[}
\nc{\lright}{]}
\nc{\curlyl}{\left \{ \begin{array}{c} {} \\ {} \end{array}
    \right .  \!\!\!\!\!\!\!}
\nc{\curlyr}{ \!\!\!\!\!\!\!
    \left . \begin{array}{c} {} \\ {} \end{array}
    \right \} }
\nc{\longmid}{\left | \begin{array}{c} {} \\ {} \end{array}
    \right . \!\!\!\!\!\!\!}
\nc{\ora}[1]{\stackrel{#1}{\rar}}
\nc{\ola}[1]{\stackrel{#1}{\la}}
\nc{\scs}[1]{\scriptstyle{#1}}
\nc{\mrm}[1]{{\rm #1}}
\nc{\dirlim}{\displaystyle{\lim_{\longrightarrow}}\,}
\nc{\invlim}{\displaystyle{\lim_{\longleftarrow}}\,}
\nc{\dislim}[1]{\displaystyle{\lim_{#1}}} \nc{\colim}{\mrm{colim}}
\nc{\mvp}{\vspace{0.3cm}} \nc{\tk}{^{(k)}} \nc{\tp}{^\prime}
\nc{\ttp}{''} \nc{\svp}{\vspace{2cm}}
\nc{\vp}{\vspace{8cm}}
\nc{\modg}[1]{\!<\!\!{#1}\!\!>}
\nc{\intg}[1]{F_C(#1)}
\nc{\lmodg}{\!<\!\!}
\nc{\rmodg}{\!\!>\!}
\nc{\cpi}{\widehat{\Pi}}
\nc{\sha}{{\mbox{\cyr X}}}  
\nc{\ssha}{{\mbox{\cyrs X}}} 
\nc{\tsha}{{\mbox{\cyrt X}}}
\nc{\shai}{{\stackrel{\ra}{\sha}}}
\nc{\shpr}{\diamond}    
\nc{\labs}{\mid\!}
\nc{\rabs}{\!\mid}
\nc{\dr}{\frakR}
\nc{\cdr}{\frakC\frakR}
\nc{\er}{\frakW}
\nc{\cer}{\frakC\frakW}
\nc{\DR}{\overline{\frakR}}
\nc{\CDR}{\overline{\frakC\frakR}}
\nc{\ER}{\overline{\frakW}}
\nc{\CER}{\overline{\frakC\frakW}}
\font\cyr=wncyr10
\font\cyrs=wncyr7
\font\cyrt=wncyr5
\nc{\ann}{\mrm{ann}}
\nc{\Aut}{\mrm{Aut}}
\nc{\can}{\mrm{can}}
\nc{\Cont}{\mrm{Cont}}
\nc{\rchar}{\mrm{char}}
\nc{\cok}{\mrm{coker}}
\nc{\dtf}{{R-{\rm tf}}}
\nc{\dtor}{{R-{\rm tor}}}
\nc{\Div}{{\mrm Div}}
\nc{\End}{\mrm{End}}
\nc{\Ext}{\mrm{Ext}}
\nc{\Fil}{\mrm{Fil}}
\nc{\Fr}{\mrm{Fr}}
\nc{\Frob}{\mrm{Frob}}
\nc{\Gal}{\mrm{Gal}}
\nc{\GL}{\mrm{GL}}
\nc{\Hom}{\mrm{Hom}}
\nc{\hsr}{\mrm{H}}
\nc{\hpol}{\mrm{HP}}
\nc{\id}{\mrm{id}}
\nc{\im}{\mrm{im}}
\nc{\incl}{\mrm{incl}}
\nc{\Irr}{\mrm{Irr}}
\nc{\length}{\mrm{length}}
\nc{\mforall}{\quad \text{for all }}
\nc{\mchar}{\rm char}
\nc{\mpart}{\mrm{part}}
\nc{\ql}{{\QQ_\ell}}
\nc{\qp}{{\QQ_p}}
\nc{\rank}{\mrm{rank}}
\nc{\rcot}{\mrm{cot}}
\nc{\rdef}{\mrm{def}}
\nc{\rdiv}{{\rm div}}
\nc{\rtf}{{\rm tf}}
\nc{\rtor}{{\rm tor}}
\nc{\res}{\mrm{res}}
\nc{\SL}{\mrm{SL}}
\nc{\Spec}{\mrm{Spec}}
\nc{\tor}{\mrm{tor}}
\nc{\Tr}{\mrm{Tr}}
\nc{\tr}{\mrm{tr}}
\nc{\bfk}{{\bf k}}
\nc{\bfone}{{\bf 1}}
\nc{\bfzero}{{\bf 0}}
\nc{\Diff}{\mathbf{Diff}}
\nc{\FMod}{\mathbf{FMod}}
\nc{\Int}{\mathbf{Int}}
\nc{\Mon}{\mathbf{Mon}}
\nc{\remarks}{\noindent{\bf Remarks: }}
\nc{\Rep}{\mathbf{Rep}}
\nc{\Rings}{\mathbf{Rings}}
\nc{\Sets}{\mathbf{Sets}}
\nc{\BA}{{\mathbb A}}   \nc{\CC}{{\mathbb C}}
\nc{\DD}{{\mathbb D}}   \nc{\EE}{{\mathbb E}}
\nc{\FF}{{\mathbb F}}   \nc{\GG}{{\mathbb G}}
\nc{\HH}{{\mathbb H}}   \nc{\LL}{{\mathbb L}}
\nc{\NN}{{\mathbb N}}   \nc{\PP}{{\mathbb P}}
\nc{\QQ}{{\mathbb Q}}   \nc{\RR}{{\mathbb R}}
\nc{\TT}{{\mathbb T}}   \nc{\VV}{{\mathbb V}}
\nc{\ZZ}{{\mathbb Z}}   \nc{\TP}{\widetilde{P}}
\nc{\cala}{{\mathcal A}}    \nc{\calc}{{\mathcal C}}
\nc{\cald}{\mathcal{D}}     \nc{\cale}{{\mathcal E}}
\nc{\calf}{{\mathcal F}}    \nc{\calg}{{\mathcal G}}
\nc{\calh}{{\mathcal H}}    \nc{\cali}{{\mathcal I}}
\nc{\call}{{\mathcal L}}    \nc{\calm}{{\mathcal M}}
\nc{\caln}{{\mathcal N}}    \nc{\calo}{{\mathcal O}}
\nc{\calp}{{\mathcal P}}    \nc{\calr}{{\mathcal R}}
\nc{\cals}{{\mathcal S}}    \nc{\calt}{{\Omega}}
\nc{\calw}{{\mathcal W}}    \nc{\calx}{{\mathcal X}}
\nc{\CA}{\mathcal{A}}
\nc{\fraka}{{\mathfrak a}}
\nc{\frakb}{{\mathfrak b}}
\nc{\frakc}{{\mathfrak c}}
\nc{\frakd}{{\mathfrak d}}
\nc{\frake}{{\mathfrak e}}
\nc{\fraki}{{\mathfrak i}}
\nc{\frakj}{{\mathfrak j}}
\nc{\frakk}{{\mathfrak k}}
\nc{\frakB}{{\frak B}}
\nc{\frakC}{{\frak C}}
\nc{\frakE}{{\frak E}}
\nc{\frakm}{{\frak m}}
\nc{\frakM}{{\frak M}}
\nc{\frakp}{{\frak p}}
\nc{\frakR}{{\frak R}}
\nc{\frakS}{{\frak S}}
\nc{\frakA}{{\frak A}}
\nc{\frakW}{{\frak W}}
\nc{\fraku}{{\frak u}}
\nc{\frakv}{{\frak v}}
\nc{\frakw}{{\frak w}}
\nc{\frakx}{{\frak x}}
\nc{\fraky}{{\frak y}}
\nc{\ynr}[1]{\textcolor{blue}{\underline{Yunnan:}#1 }}
\nc{\lir}[1]{\textcolor{red}{\underline{Li:}#1 }}
\nc{\rgr}[1]{\textcolor{violet}{\underline{Richard:}#1 }}
\begin{document}

\title[Primitive decompositions of idempotents]{Primitive decompositions of idempotents of the group algebras of dihedral groups and generalized quaternion groups}

\author{Lilan Dai}
\address{School of Mathematics and Information Science, Guangzhou University, Waihuan Road West 230, Guangzhou 510006, China}
\email{2112015048@e.gzhu.edu.cn}

\author{Yunnan Li}
\address{School of Mathematics and Information Science, Guangzhou University,
Waihuan Road West 230, Guangzhou 510006, China}
\email{ynli@gzhu.edu.cn}

\date{\today}

\begin{abstract}
In this paper, we introduce a method computing the primitive decomposition of idempotents of any semisimple finite group algebra based on its matrix representations and Wedderburn decomposition. Particularly, we use this method to calculate the examples of the dihedral group algebras $\mathbb{C}[D_{2n}]$ and generalized quaternion group algebras $\mathbb{C}[Q_{4m}]$. Inspired by the orthogonality relations of the character tables of these two families of groups, we obtain two sets of trigonometric identities. Furthermore, a group algebra isomorphism between $\mathbb{C}[D_{8}]$ and $\mathbb{C}[Q_{8}]$ is described, under which the two complete sets of primitive orthogonal idempotents of these two group algebras we find correspond to each other bijectively.
\end{abstract}

\subjclass[2010]{20C05, 20C15}

\keywords{idempotent, primitive decomposition, group algebra}

\maketitle

\tableofcontents

\allowdisplaybreaks

\section{Introduction}
Given any finite group $G$ and field $F$, denote $F[G]$ the group ring of $G$ over $F$. When ${\rm char}\,F\nmid|G|$, $F[G]$ is semisimple by Maschke's theorem. Then by Wedderburn's structure theorem, $F[G]$ is isomorphic to a direct sum of matrix algebras. The Wedderburn decomposition becomes a key tool for studying group algebra problems (\cite{FF,He,JeL,JR1,ORS2,RS}). For example,  Macedo Ferreira et al. dealt with the Wedderburn $b$-decomposition for alternative baric algebras \cite{FF}. Jespers et al. reduced the number of generators for a subgroup of finite index in a certain kind of unit group $\mathcal{U}(\mathbb{Z}[G])$ by having a closer look at the Wedderburn decomposition of $\mathbb{Q}[G]$ \cite{JeL}. Olivieri et al. studied the automorphism group $\Aut(\mathbb{Q}[G])$ of the rational group algebra $\mathbb{Q}[G]$ of a finite metacyclic group $G$ by describing the simple components of the Wedderburn decomposition of $\mathbb{Q}[G]$ \cite{ORS2}.

As the main objects discussed throughout our paper, dihedral groups $D_{2n}$ describe 2-dimensional objects that have rotational and reflective symmetry, such as regular polygons, and generalized quaternion groups $Q_{4m}$ generalize the quaternion group $Q_8$.
In physics, the theory of rigid motion analysis and the practical problem of motion control are all related to quaternions, and many applications in physics use the concept and extension of quaternions.

The Wedderburn decomposition of group algebras of these two families of groups has already attracted much attention. For instance, Giraldo Vergara and Brochero Mart\'inez gave an elementary proof of the Wedderburn decomposition of rational quaternion and dihedral group algebras \cite{VM}. Giraldo Vergara used the classification of groups of order $\leq32$ and also computed the Wedderburn decomposition of their rational group algebras in order to classify the rational group algebras of dimension $\leq32$ \cite{Ve}. Bakshi et al. calculated a complete set of primitive central idempotents and the Wedderburn decomposition of the rational group algebra of a finite metabelian group \cite{BKP}. Brochero Mart\'inez showed explicitly the primitive central idempotents of $F_{q}[D_{2n}]$ and an isomorphism between the group algebra $F_{q}[D_{2n}]$ and its Wedderburn decomposition when every prime factor of $n$ divides $q-1$ \cite{Ma}. Gao and Yue focused on the algebraic structure of the generalized quaternion group algebras $F_{q}[Q_{4m}]$ over finite field $F_{q}$~\cite{GY}.

What's more, the study on primitive orthogonal idempotents of group algebras has ignited much interest.
For many classes of groups, such as nilpotent, monomial and supersolvable groups, a complete description of the idempotents of their group algebras was obtained by Berman~(see e.g. \cite{ILPSSZ}). For example, Berman actually has constructed the minimal central idempotents of the group ring $R(G, F)$ in terms of the central idempotents of $R(H, F)$ when $G$ is an abelian extension of a group $H$ in 1955. Furthermore, the complete system of minimal idempotents of $R(G,F)$ was given in terms of such a system for $R(H, F)$ when $G/H$ is cyclic~\cite{B1}. After that, he characterized a complete system of primitive orthogonal idempotents of $F[G]$ for any solvable group $G$ of class $M_1$ in~\cite{B2} by calculating linear characters of its subgroups, where $F$ is any field of characteristic prime to $|G|$ containing a primitive root of unity of $|G|$.

After nearly 40 years, a method somewhat different but closely related to Berman's in calculating primitive orthogonal idempotents of these group algebras was proposed.
Around 2004, Olivieri et al. gave a character-free method to describe the primitive central idempotents of $\mathbb{Q}[G]$ when $G$ is a monomial group~\cite{ORS1}.
Later, an explicit and character-free construction of a complete set of primitive orthogonal idempotents of $\mathbb{Q}[G]$ was provided in~\cite{JOR} for any finite nilpotent group $G$ (see also~\cite{OG} for the case over finite fields), and in~\cite{JORV} for any finite strongly monomial group $G$ such that there exists a complete and non-redundant set of strong Shoda pairs with trivial twistings. See also \cite[Chapter 13]{JR2} for an overall introduction to this topic.

In this paper, after calculating the primitive central idempotents of $\mathbb{C}[D_{2n}]$ and $\mathbb{C}[Q_{4m}]$ via irreducible characters,
we further consider their primitive decompositions of idempotents. Note that dihedral groups $D_{2n}$ and generalized quaternion groups $Q_{4m}$ are not only supersolvable groups, but also strongly monomial groups. Their primitive decompositions of idempotents surely can be obtained by Berman's method in \cite{B2}. Also, a complete set of primitive orthogonal idempotents of any dihedral group can be constructed via strong Shoda pairs, but questionably for all generalized quaternion groups~\cite[\S~4]{JORV}. By contrast, the computation of primitive decompositions of idempotents here mainly depends on matrix representations of groups and Wedderburn decompositions of group algebras~(Lemma~\ref{lem:wed}). Such an approach is theoretically applicable to any semisimple group algebra over arbitrary field whenever a complete set of its non-equivalent irreducible matrix representations have been figured out. In particular, it's exactly available to the examples of dihedral groups and generalized quaternion groups.

On the other hand, given two primitive decomposition of idempotents of two isomorphic group algebras respectively, it seems very difficult to obtain
 a certain algebra isomorphism between them, making these two complete sets of primitive orthogonal idempotents correspond to each other.
 Here we solve one small but nontrivial case, establishing an explicit isomorphism between $\mathbb{C}[D_{8}]$ and $\mathbb{C}[Q_{8}]$ which respects the list of primitive orthogonal idempotents we find previously. Indeed, there already have plenty of results for the group algebras of $D_{8}$ and $Q_{8}$. For example, Bagi\'nski studied group algebras of 2-groups of maximal class over fields of characteristic 2, so we know that $F_{2}[D_{8}]$ and $F_{2}[Q_{8}]$ are not isomorphic as rings~\cite{CB}.
Coleman discussed group rings over the complex and real number fields and over the ring of integers in \cite{Co}, which tells us that $\mathbb{C}[Q_{8}]\cong \mathbb{C}[D_{8}]$, but $\mathbb{R}[Q_{8}]\ncong \mathbb{R}[D_{8}]$ and $\mathbb{Z}[Q_{8}]\ncong \mathbb{Z}[D_{8}]$. As $\mathbb{R}$ is a field extension of $\mathbb{Q}$, it also implies that $\mathbb{Q}[Q_{8}]\ncong\mathbb{Q}[D_{8}]$ .
Tambara and Yamagami pointed out that $Q_{8}$ and $D_{8}$ have the same representation ring, but non-isomorphic representation categories as tensor categories \cite{TY}.

Here is the layout of the paper. In Section~\ref{se:di-idem} and Section~\ref{se:gqg-idem}, the primitive central idempotents of dihedral groups and generalized quaternion groups are calculated by  their irreducible characters.
Furthermore, primitive decompositions of idempotents corresponding to their two-dimensional representations are analyzed. In Section~\ref{se:tri-iden}, two sets of general trigonometric identities reflecting the orthogonality relations of irreducible characters of dihedral groups and generalized quaternion groups are given. In Section~\ref{se:alg-isom}, a group algebra isomorphism between $\mathbb{C}[Q_{8}]$ and $\mathbb{C}[D_{8}]$ is described, which also provides a correspondence between their primitive orthogonal idempotents previously worked out.

\section{A primitive decomposition of idempotents of $\CC[D_{2n}]$}\label{se:di-idem}
\subsection{Conjugacy classes of $D_{2n}$}
 Let $D_{2n}$ be the dihedral group of order $2n$, i.e.
 $$D_{2n}=\{r,s\,|\,r^n=s^2={\bf 1},\,srs=r^{-1}\}=\{{\bf 1},r,\dots,r^{n-1},s,rs,\dots,r^{n-1}s\}.$$
When $n$ is odd, namely $n=2m+1$, $D_{2n}$ has the following conjugacy classes:
$$[{\bf 1}]=\{{\bf 1}\},\,[r^i]=\{r^{\pm i}\,|\,1 \leq i \leq m\},\,[s]=\{s,rs,\dots,r^{n-1}s\}.$$
When $n$ is even, namely $n=2m$, $D_{2n}$ has the following conjugacy classes:
$$\{{\bf1}\},\,\{r^{m}\},\,\{r^{\pm i}\,|\,1\leq i\leq m-1\},\,\{r^{2k}s\,|\,0\leq k \leq m-1\},\,\{r^{2k+1}s\,|\,0\leq k \leq m-1\}.$$
\subsection{Character table of $D_{2n}$}\label{subsec:char}
(i) $n=2m+1$. We look at the one-dimensional representations first. Note that $D_{2n}/\langle r\rangle \cong \langle s \rangle$, which is abelian, hence the derived subgroup $D'_{2n}\subseteq \langle r\rangle$. Clearly, $s^{-1}r^{-1}sr=r^2\in D'_{2n}$, thus we have $D'_{2n}\supseteq \langle r^2\rangle$. Note that $r^{2m}=r^{-1}\in \langle r^2\rangle$, therefore $\langle r^2\rangle=\langle r\rangle$. Then $D'_{2n}=\langle r\rangle$. As a result, $D_{2n}$ has two one-dimensional representations and $D_{2n}/\langle r\rangle \cong C_{2}$, where $C_{2}$ is the cyclic group of order 2.

Next we introduce these two-dimensional irreducible representations of $D_{2n}$ from its natural geometric description~ \cite[Part I, 5.3]{S}. We can set up a rectangular coordinate system, where the origin is the center of a regular $n$-sided polygon, and the angular bisectors in the first and third quadrants is one of the symmetry axes of the regular $n$-sided polygon. Since $D_{2n}$ is a permutation group of regular $n$-sided polygons, the matrices of $r$, $s$ with respect to the standard basis can be given. Then we have the following natural representations:
\begin{equation}\label{eq:di-1-repn}
\rho_{k}(r)=\begin{pmatrix}
      \cos\frac{2k\pi}{n} &-\sin\frac{2k\pi}{n}\\[.2em]
      \sin\frac{2k\pi}{n}&\cos\frac{2k\pi}{n}
      \end{pmatrix},\quad
 \rho_{k}(s)=\begin{pmatrix}
      0 &1\\
      1&0
      \end{pmatrix},\quad  1 \leq k \leq m.
\end{equation}
which are $m$ mutually non-equivalent two-dimensional irreducible representations of $D_{2n}$. Thus, when $n $ is an odd number, we set $\theta = \dfrac {2\pi}{n} $, and list the character table of $D_{2n}$:
\begin{table}[h]
\begin{tabular}{|c|c|c|c|c|c|c|c|c|}

     \hline
   & {\bf1} & $s$ & $r$ & $r^2$ &$r^3$& $\cdots$ & $r^{m-1}$ &$r^m$ \\
   & (1)& ($n$) & (2) & (2) & (2)&$\cdots$ & (2)&(2) \\
  \hline
  $\chi_{1} $& 1 & 1 & 1 & 1 &1& $\cdots$ & 1 &1\\
  $\chi_{2}$ & 1 & $-1$ & 1 & 1 &1& $\cdots$ & 1 &1\\
  $\chi_{\rho_{1}}$ & 2 & 0 & $2\cos\theta$ & $2\cos2\theta$ & $2\cos3\theta$ &$\cdots$ &$2\cos (m-1)\theta$& $2\cos m\theta$ \\
  $\vdots$ & $\vdots$ & $\vdots$ & $\vdots$ & $\vdots$ & $\vdots$ & $\vdots$ & $\vdots$ & $\vdots$ \\
  $\chi_{\rho_{m}}$ & 2 & 0 & $2\cos m\theta$ & $2\cos2m\theta$ & $2\cos3m\theta$ & $\cdots$ &$2\cos (m-1)m\theta$& $2\cos m^2\theta$ \\
  \hline
\end{tabular}
\vspace{.5em}
\caption{Irreducible characters of $D_{2n}$}\label{tb:1}
\end{table}

(ii) $n=2m$. Similarly, $\langle r^2\rangle$ is a normal subgroup of $D_{2n}$ as $sr^2s^{-1}=r^{-2}\in\langle r^2\rangle$, and $|D_{2n}/\langle r^2\rangle|=4$, then $D_{2n}/\langle r^2\rangle$ is abelian, and thus $D'_{2n}\subseteq\langle r^2\rangle$. Clearly, $r^2=s^{-1}r^{-1}sr\in D'_{2n}$, we also have $D'_{2n}\supseteq\langle r^2\rangle$, so $D'_{2n}=\langle r^2\rangle$. As a result, $D_{2n}$ has four one-dimensional representations and $D_{2n}/\langle r^2\rangle\cong C_{2}\times C_{2}$.

 If $n$ is an even number, we can also obtain $m-1$ pairwise non-equivalent two-dimensional irreducible representations of $D_{2n}$:
\begin{equation}\label{eq:di-2-repn}
\rho_{k}(r)=\begin{pmatrix}
      \cos\frac{2k\pi}{n} &-\sin\frac{2k\pi}{n}\\[.2em]
      \sin\frac{2k\pi}{n}&\cos\frac{2k\pi}{n}
      \end{pmatrix},\quad
 \rho_{k}(s)=\begin{pmatrix}
      0 &1\\
      1&0
      \end{pmatrix},\quad  1 \leq k \leq m-1.
\end{equation}
  Thus, when $n $ is an even number, we set $\theta = \dfrac {2\pi}{n} $, and list the character table of $D_{2n}$:
 \begin{table}[h]
 \begin{tabular}{|c|c|c|c|c|c|c|c|c|}
    \hline
   & {\bf1} & $s$ & $sr$ & $r$ & $r^2$ & \dots & $r^{m-1}$ & $r^m$ \\
   & (1)& ($m$) & ($m$) & (2) & (2) & \dots & (2)& (1) \\
  \hline
  $\chi_{1} $& 1 & 1& 1 & 1 & 1 & \dots & 1& 1 \\
  $\chi_{2}$ & 1 & 1& $-1$ & $-1$ & 1 & $\cdots$  & $(-1)^{m-1}$ & $(-1)^{m}$ \\
  $\chi_{3}$ & 1 & $-1$& $1$ & $-1$ & 1 & $\cdots$  & $(-1)^{m-1}$ & $(-1)^{m}$ \\
  $\chi_{4}$ & 1 & $-1$& $-1$ & $1$ & 1 & $\cdots$  & $1$& $1$ \\
  $\chi_{\rho_{1}}$ & 2 & 0& 0 & $2\cos\theta$ & $2\cos2\theta$ & $\cdots$  & $2\cos(m-1)\theta$ &$-2$ \\
  $\vdots$ &   $\vdots$ &   $\vdots$ &   $\vdots$ &   $\vdots$ &   $\vdots$ &   $\vdots$ & $\vdots$ &   $\vdots$ \\
  $\chi_{\rho_{m-1}}$ & 2 & 0& 0 & $2\cos(m-1)\theta$ & $2\cos2(m-1)\theta$ & $\cdots$  & $2\cos(m-1)^{2}\theta$ &$2(-1)^{m-1}$\\
  \hline
\end{tabular}
\vspace{.5em}
\caption{Irreducible characters of $D_{2n}$}\label{tb:2}
\end{table}
\subsection{A primitive decomposition of idempotents}
\begin{theorem}(Wedderburn Structure Theorem).\label{thn:wed}
Let $F$ be any field such that ${\rm char}\,F\nmid|G|$. Then
$$F[G]\stackrel{\varphi}{\cong} M_{n_{1}}(D_1)\oplus \cdots\oplus M_{n_{s}}(D_s)$$
as algebras, where $D_k$ is a division $F$-algebra, and each matrix algebra $M_{n_{k}}(D_k)$ uniquely determines an irreducible representation $\rho_{k}$ of $G$ up to equivalence, and $n_{k}$ is equal to its dimension over $D_k$ for $k=1,\dots,s$.
\end{theorem}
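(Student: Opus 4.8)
The plan is to derive the statement from two classical pillars: Maschke's theorem, which furnishes semisimplicity, and the Wedderburn--Artin structure theorem, which decomposes any finite-dimensional semisimple algebra into matrix blocks over division rings. The representation-theoretic dictionary at the end is then a matter of bookkeeping with Schur's lemma.

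First I would establish that $F[G]$ is semisimple. Since $\mathrm{char}\,F \nmid |G|$, the integer $|G|$ is invertible in $F$. Given any $F[G]$-submodule $W$ of a module $V$, I would pick an arbitrary $F$-linear projection $\pi \colon V \to W$ and average it over the group, setting $\pi_0 = \frac{1}{|G|}\sum_{g\in G} g\,\pi\,g^{-1}$. A direct check shows $\pi_0$ is $F[G]$-linear and restricts to the identity on $W$, so $\ker \pi_0$ is an $F[G]$-complement to $W$. Hence every submodule is a direct summand; in particular the regular module $F[G]$ is semisimple, and being finite-dimensional over $F$ it is a semisimple Artinian ring.

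Second I would invoke Wedderburn--Artin. The cleanest route is through the theory of semisimple modules: decompose the regular representation into isotypic components $F[G] \cong \bigoplus_{k=1}^{s} S_k^{\oplus n_k}$, where $S_1,\dots,S_s$ are the pairwise non-isomorphic simple $F[G]$-modules and $n_k$ is the multiplicity of $S_k$. By Schur's lemma each $D_k := \End_{F[G]}(S_k)$ is a division $F$-algebra, distinct isotypic components admit no nonzero homomorphisms between them, and $\End_{F[G]}(S_k^{\oplus n_k}) \cong M_{n_k}(D_k)$. Computing $\End_{F[G]}$ of the regular module therefore yields $F[G]^{\mathrm{op}} \cong \bigoplus_k M_{n_k}(D_k)$, and passing to opposite rings (using $M_{n_k}(D_k)^{\mathrm{op}} \cong M_{n_k}(D_k^{\mathrm{op}})$, with $D_k^{\mathrm{op}}$ again a division algebra) gives the block decomposition in the statement.

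Finally I would record the correspondence. The central primitive idempotents of $F[G]$ project onto the simple two-sided ideals $M_{n_k}(D_k)$, and these ideals are in bijection with the isomorphism classes of simple modules $S_k$, i.e. with the equivalence classes of irreducible representations $\rho_k$; this is the asserted uniqueness. By construction $D_k = \End_{F[G]}(S_k)$ and $n_k$ is the dimension of $S_k$ as a $D_k$-vector space, which is the dimension of $\rho_k$ over $D_k$. The main obstacle is the internal content of the Wedderburn--Artin step --- showing that the endomorphism ring of a finite sum of copies of a fixed simple module is a full matrix algebra over its (division) endomorphism ring and that the isotypic components decouple; once semisimplicity and Schur's lemma are in place, Maschke's averaging and the translation into the language of representations are routine.
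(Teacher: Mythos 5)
Your proof is correct, but note that the paper does not prove this statement at all: it is quoted as classical background (the Wedderburn structure theorem for semisimple group algebras) and used only as the input to Lemma~\ref{lem:wed}, so there is no argument in the paper to compare yours against. Your route --- Maschke's averaging to get semisimplicity, then Wedderburn--Artin via the isotypic decomposition of the regular module and $\End_{F[G]}$, with the $M_{n}(D)^{\mathrm{op}}\cong M_{n}(D^{\mathrm{op}})$ step handling the opposite-ring twist --- is the standard complete proof, and the only point worth spelling out slightly more is that the multiplicity $n_k$ of $S_k$ in the regular module coincides with $\dim_{D_k}S_k$, which follows once one knows the simple module of $M_{n_k}(D_k)$ is the column space $D_k^{n_k}$.
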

According to Theorem~\ref{thn:wed}, we obtain the following useful lemma.
\begin{lemma}\label{lem:wed}
For any semisimple finite group algebra $F[G]$, let $e_{\rho_k}$ be the primitive central idempotent of $F[G]$ corresponding to $\rho_{k}$. The group homomorphism $\rho_k:G\to GL(n_k,D_k)$ can be linearly extended to the following algebra homomorphism
$$F[G]\stackrel{\varphi}{\cong}  M_{n_{1}}(D_1)\oplus \cdots\oplus M_{n_{s}}(D_s)\stackrel{p_k}{\to} M_{n_{k}}(D_k),$$
which is an isomorphism when restricted on $F[G]e_{\rho_k}$. In particular, the preimages of the matrix units $E_{11},\dots,E_{n_k,n_k}$ of $ M_{n_{k}}(D_k)$ under this isomorphism provide a primitive decomposition of $e_{\rho_k}$ in $F[G]$.
Here we denote $p_k$ the natural projection.
\end{lemma}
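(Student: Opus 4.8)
The plan is to treat this statement as a bookkeeping consequence of Theorem~\ref{thn:wed}, organized into three moves: identifying the map, locating the idempotent, and transporting the matrix units back. First I would make precise the phrase ``linearly extended''. By the last clause of Theorem~\ref{thn:wed}, the irreducible representation $\rho_k$ attached to the $k$-th block is exactly $g\mapsto p_k(\varphi(g))$ for $g\in G$; since both $\varphi$ and $p_k$ are $F$-algebra homomorphisms, the composite $p_k\circ\varphi:F[G]\to M_{n_k}(D_k)$ is an $F$-algebra homomorphism agreeing with $\rho_k$ on $G$. By the universal property of $F[G]$ (any homomorphism from $G$ into the units of an $F$-algebra extends uniquely to an $F$-algebra map on $F[G]$), this composite \emph{is} the linear extension of $\rho_k$. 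So the top row of the displayed diagram needs no verification beyond this identification.

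Next I would pin down the image of the central idempotent. Writing $\varphi(e_{\rho_k})=(\epsilon_1,\dots,\epsilon_s)$ with $\epsilon_j\in M_{n_j}(D_j)$, the defining property of ``the primitive central idempotent corresponding to $\rho_k$'' is that $\varphi(e_{\rho_k})$ is the block unit $(0,\dots,0,I_{n_k},0,\dots,0)$, i.e. $\epsilon_k=I_{n_k}$ and $\epsilon_j=0$ for $j\neq k$. If this has not already been recorded, I would derive it from the fact that the $s$ block units are precisely the primitive central idempotents of $\bigoplus_j M_{n_j}(D_j)$ (each $M_{n_j}(D_j)$ is simple, so its centre is a field with no nontrivial idempotent), together with the fact that the isomorphism $\varphi$ carries primitive central idempotents bijectively to primitive central idempotents. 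Consequently $\varphi$ maps the two-sided ideal $F[G]e_{\rho_k}$ onto $\varphi(F[G])\varphi(e_{\rho_k})=0\oplus\cdots\oplus M_{n_k}(D_k)\oplus\cdots\oplus 0$, and post-composing with $p_k$ yields an algebra isomorphism $F[G]e_{\rho_k}\stackrel{\sim}{\to}M_{n_k}(D_k)$ carrying $e_{\rho_k}$ to $I_{n_k}$. This is exactly the isomorphism asserted in the statement.

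Finally I would transport the standard diagonal matrix units. In $M_{n_k}(D_k)$ the elements $E_{11},\dots,E_{n_k n_k}$ are orthogonal idempotents summing to $I_{n_k}$, and each is primitive because $E_{ii}M_{n_k}(D_k)E_{ii}\cong D_k$ is a division ring and hence contains only the idempotents $0$ and $1$ (this is where the possibly noncommutative nature of $D_k$ causes no trouble: only the division property is used). Pulling them back through the isomorphism of the previous paragraph produces orthogonal idempotents $\varepsilon_1,\dots,\varepsilon_{n_k}\in F[G]e_{\rho_k}$ with $\sum_i\varepsilon_i=e_{\rho_k}$. The one point requiring a short argument is that each $\varepsilon_i$ is primitive in the \emph{full} algebra $F[G]$, not merely inside the corner $F[G]e_{\rho_k}$: if $\varepsilon_i=f+f'$ with $f,f'$ orthogonal idempotents of $F[G]$, then $f=\varepsilon_i f=f\varepsilon_i$ forces $f,f'\in F[G]e_{\rho_k}$ (since that ideal is two-sided and contains $\varepsilon_i$), so any nontrivial splitting in $F[G]$ would give one inside the corner, contradicting the primitivity of $E_{ii}$. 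This reduces primitivity in $F[G]$ to primitivity in $M_{n_k}(D_k)$.

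The only genuine obstacle is the bookkeeping of the second paragraph: making sure that ``the primitive central idempotent corresponding to $\rho_k$'' is unambiguously the preimage of the $k$-th block unit, which is what aligns the abstract Wedderburn decomposition with the concrete projection $p_k$. Everything else is a direct transcription of the internal structure of $\bigoplus_j M_{n_j}(D_j)$ across the isomorphism $\varphi$.
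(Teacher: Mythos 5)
Your proof is correct, and it follows exactly the route the paper intends: the paper states Lemma~\ref{lem:wed} without proof, presenting it as an immediate consequence of Theorem~\ref{thn:wed}, and your three steps (identifying $p_k\circ\varphi$ with the linear extension of $\rho_k$, recognizing $\varphi(e_{\rho_k})$ as the $k$-th block unit so that the restriction to $F[G]e_{\rho_k}$ is an isomorphism onto $M_{n_k}(D_k)$, and pulling back the diagonal matrix units) are precisely the details being taken for granted. Your closing observation that primitivity of $E_{ii}$ inside the corner $F[G]e_{\rho_k}$ already gives primitivity in all of $F[G]$ is a worthwhile point that the paper's terse statement glosses over.
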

Also, it is well-known that all primitive central idempotents of the semisimple  group algebra $F[G]$ of a finite group $G$ can be obtained by its character table (see e.g. \cite[Theorem 3.6.2]{We}), namely
\begin{equation}\label{eq:pcidem}
e_{\chi}=\frac{1}{|G|}\sum\limits_{g\in G} \chi({\bf1})\chi(g^{-1})g,\quad\forall \chi\in \Irr(G).
\end{equation}

Applying Eq.~\eqref{eq:pcidem} to the case of dihedral group $D_{2n}$, we immediately have
\begin{prop}\label{prop:di-idem}
Let $D_{2n}$ be the dihedral group of order $2n$. The primitive central idempotents corresponding to the one-dimensional irreducible representations of $D_{2n}$ are as follows.
\begin{enumerate}[(i)]
\item
When $n$ is odd, namely $n=2m+1$,
\begin{align*}
e_{1}&=\frac{1}{4m+2}(\sum\limits_{l=1}^{2m+1} r^l+\sum\limits_{l=1}^{2m+1} r^ls),\\
e_{2}&=\frac{1}{4m+2}(\sum_{l=1}^{2m+1} r^l-\sum_{l=1}^{2m+1}r^ls).\\
\end{align*}
\item
When $n$ is even, namely $n=2m$,
\begin{align*}
e_{1}&=\frac{1}{4m}(\sum\limits_{l=1}^{2m} r^l+\sum\limits_{l=1}^{2m} r^ls),\\
e_{2}&=\frac{1}{4m}[{\bf 1}+\sum_{l=1}^{2m}(-1)^l\cdot r^ls+\sum_{l=1}^{m-1}(-1)^l\cdot (r^l+r^{-l})+(-1)^m\cdot r^m],\\
e_{3}&=\frac{1}{4m}[{\bf 1}+\sum_{l=1}^{2m}(-1)^{l+1}\cdot r^ls+\sum_{l=1}^{m-1}(-1)^l\cdot (r^l+r^{-l})+(-1)^m\cdot r^m],\\
e_{4}&=\frac{1}{4m}(\sum\limits_{l=1}^{2m} r^l-\sum\limits_{l=1}^{2m} r^ls).
\end{align*}
\end{enumerate}
\end{prop}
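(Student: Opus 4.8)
The plan is to substitute each one-dimensional irreducible character directly into the central-idempotent formula~\eqref{eq:pcidem}, using the character values recorded in Section~\ref{subsec:char}. The key simplification is that every such character $\chi_i$ takes values in $\{\pm1\}$, hence is real, so that $\chi_i(g^{-1})=\overline{\chi_i(g)}=\chi_i(g)$ for all $g\in D_{2n}$. Consequently \eqref{eq:pcidem} collapses to $e_i=\frac{1}{2n}\sum_{g\in D_{2n}}\chi_i(g)\,g$, and the entire task reduces to evaluating $\chi_i$ on each group element and collecting terms. (Since the corresponding Wedderburn block is $1$-dimensional, each such $e_i$ is automatically primitive, so no further decomposition is needed for these characters.)

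First I would evaluate the characters element by element. Writing a general element as a rotation $r^l$ or a reflection $r^l s$ and using that $\chi_i$ is multiplicative, we have $\chi_i(r^l)=\chi_i(r)^l$ and $\chi_i(r^l s)=\chi_i(r)^l\chi_i(s)$, where the two seed values $\chi_i(r)$ and $\chi_i(s)$ are read straight off Table~\ref{tb:1} or Table~\ref{tb:2}. For $n=2m+1$ this yields $\chi_1\equiv1$ and $\chi_2(r^l)=1$, $\chi_2(r^l s)=-1$; summing and using $r^{2m+1}=\mathbf{1}$ (so that the ranges $\sum_{l=1}^{2m+1}$ sweep out the full rotation and reflection parts) gives $e_1$ and $e_2$. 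For $n=2m$ the four linear characters are indexed by the pair $(\chi_i(r),\chi_i(s))\in\{\pm1\}^2$, producing $\chi_i(r^l)=\chi_i(r)^l$ and $\chi_i(r^l s)=\chi_i(r)^l\chi_i(s)$ as above.

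Next I would regroup the rotation sum by pairing $r^l$ with its inverse $r^{-l}=r^{n-l}$, on which $\chi_i$ agrees, for $1\le l\le m-1$, leaving $\mathbf{1}$ and the self-inverse element $r^m$ as singletons. When $\chi_i(r)=-1$ this produces the factor $(-1)^l$ on each pair $r^l+r^{-l}$ together with $(-1)^m r^m$, matching $e_2$ and $e_3$; when $\chi_i(r)=1$ the rotation sum is just $\sum_l r^l$, matching $e_1$ and $e_4$. Each reflection is an involution, so the reflection terms need no pairing and contribute $\chi_i(r^l s)\,r^l s$ directly: this gives $\pm\sum_l r^l s$ for $\chi_1,\chi_4$ and the alternating sums $\sum_{l=1}^{2m}(-1)^l r^l s$ (respectively with $(-1)^{l+1}$) for $\chi_2,\chi_3$. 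There is no genuine obstacle beyond careful sign bookkeeping — in particular keeping $\chi_i(s)$ separate from $\chi_i(r)^l$ so the reflection terms carry the correct parity $(-1)^l$ versus $(-1)^{l+1}$ — together with the harmless reindexing that rewrites $\sum_{l=0}^{2m-1}$ as $\sum_{l=1}^{2m}$ to match the ranges displayed in the statement.
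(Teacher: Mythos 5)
Your proposal is correct and matches the paper's treatment: the paper gives no separate proof of Proposition~\ref{prop:di-idem}, simply applying Eq.~\eqref{eq:pcidem} to the character values in Tables~\ref{tb:1} and \ref{tb:2}, which is exactly the direct substitution and sign bookkeeping you carry out.
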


In order to obtain a primitive decomposition of idempotents of $\CC[D_{2n}]$, we mainly need to deal with its primitive idempotents corresponding to two-dimensional irreducible representations.
\begin{theorem}\label{th:di-idem}
Let $D_{2n}$ be the dihedral group of order $2n$. We have the following primitive decomposition $e_{\rho_{k}}=e'_{\rho_{k}}+e''_{\rho_{k}}$ of the primitive central idempotent $e_{\rho_{k}}$ corresponding to the two-dimensional irreducible representation $(\CC^2,\rho_k)$ of $D_{2n}$ defined in Eqs.~\eqref{eq:di-1-repn} and \eqref{eq:di-2-repn} for $k=1,\dots,\lfloor(n-1)/2\rfloor$.
\begin{enumerate}[(i)]
\item
When $n$ is odd, namely $n=2m+1$,
\begin{align*}
e_{\rho_{k}}&=\frac{2}{2m+1}\sum\limits_{l=1}^{2m+1}\cos lk\theta\cdot r^l,\\
e'_{\rho_{k}}&=\frac{1}{2m+1}({\bf 1}+\sum_{l=1}^{2m}\cos lk\theta \cdot r^l+\sum_{l=1}^{2m}\sin lk\theta\cdot r^ls),\\
e''_{\rho_{k}}&=\frac{1}{2m+1}({\bf 1}+\sum\limits_{l=1}^{2m}\cos lk\theta \cdot r^l-\sum\limits_{l=1}^{2m}\sin lk\theta\cdot r^ls),
\end{align*}
with $\theta = \dfrac {2\pi}{n}$ and $1\leq k \leq m$;
\item
When $n$ is even, namely $n=2m$,
\begin{align*}
e_{\rho_{k}}&=\frac{1}{m}\sum_{l=1}^{2m}\cos lk\theta \cdot r^l,\\
e'_{\rho_{k}}&=\frac{1}{2m}({\bf 1}+\sum_{l=1}^{2m-1}\cos lk\theta \cdot r^l+\sum_{l=1}^{2m-1}\sin lk\theta\cdot r^ls),\\
e''_{\rho_{k}}&=\frac{1}{2m}({\bf 1}+\sum\limits_{l=1}^{2m-1}\cos lk\theta \cdot r^l-\sum\limits_{l=1}^{2m-1}\sin lk\theta\cdot r^ls),
\end{align*}
with $\theta = \dfrac {2\pi}{n}$ and $1\leq k \leq m-1$.
\end{enumerate}
\end{theorem}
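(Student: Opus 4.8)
The plan is to feed the explicit character table of Section~\ref{subsec:char} into Lemma~\ref{lem:wed}, which tells us that a primitive decomposition of $e_{\rho_k}$ is obtained as the preimages of the diagonal matrix units of the block $M_2(\CC)$ under the Wedderburn isomorphism $\varphi$. Throughout I would write the representing matrices
\[
\rho_k(r^l)=\begin{pmatrix}\cos lk\theta & -\sin lk\theta\\ \sin lk\theta & \cos lk\theta\end{pmatrix},\qquad
\rho_k(r^ls)=\begin{pmatrix}-\sin lk\theta & \cos lk\theta\\ \cos lk\theta & \sin lk\theta\end{pmatrix}.
\]
First I would pin down the central idempotent itself by applying Eq.~\eqref{eq:pcidem} to $\chi=\chi_{\rho_k}$: a rotation $\rho_k(r^l)$ has trace $2\cos lk\theta$ while every reflection $\rho_k(r^ls)$ has trace $0$, and since cosine is even we have $\chi_{\rho_k}((r^l)^{-1})=\chi_{\rho_k}(r^l)$. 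Thus all reflection terms drop out of \eqref{eq:pcidem} and it collapses to $\tfrac{2}{n}\sum_l\cos(lk\theta)\,r^l$, which is the stated $e_{\rho_k}$ in both parities once $r^n=\mathbf 1$ is used to absorb the top term.

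The core step is to evaluate $\varphi$ on $e'_{\rho_k}$ and $e''_{\rho_k}$ and show each lands in a single matrix unit of the $k$-th block. Substituting the two matrices above, I would compute the four entries of $\rho_k(e'_{\rho_k})$: the diagonal entries reduce through $\cos^2\alpha-\sin^2\alpha=\cos2\alpha$ and $\cos^2\alpha+\sin^2\alpha=1$, while the off-diagonal entries reduce to $\sum_l\sin(2lk\theta)$. Every surviving trigonometric sum is the real or imaginary part of a geometric sum of roots of unity $\sum_l e^{2\pi i al/n}$, which vanishes exactly when $a\not\equiv0\pmod n$. Since $1\le k\le\lfloor(n-1)/2\rfloor$ forces $2k\not\equiv0\pmod n$, this yields $\rho_k(e'_{\rho_k})=E_{22}$ and, after reversing the sign of the $\sin$-terms, $\rho_k(e''_{\rho_k})=E_{11}$.

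Next I would check that $e'_{\rho_k}$ and $e''_{\rho_k}$ are annihilated by every other simple component, so that they genuinely lie in the ideal $\CC[G]e_{\rho_k}$. For a different two-dimensional $\rho_j$ the same entrywise computation produces the product-to-sum combinations $\sum_l\cos(l(k\pm j)\theta)$ and $\sum_l\sin(l(k\pm j)\theta)$; the decisive point is that for $1\le j,k\le\lfloor(n-1)/2\rfloor$ with $j\neq k$ one has $k\pm j\not\equiv0\pmod n$, so all these sums vanish and $\rho_j(e'_{\rho_k})=0$. For each one-dimensional character I would evaluate directly, again reducing to $\sum_l\cos(lk\theta)$ and $\sum_l\sin(lk\theta)$ (with signed variants for $\chi_2,\chi_3,\chi_4$ in the even case), all of which vanish for $k\not\equiv0$. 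Hence under $\varphi$ the element $e'_{\rho_k}$ maps to $E_{22}$ in the $k$-th block and to $0$ elsewhere, and $e''_{\rho_k}\mapsto E_{11}$. The conclusion is then immediate from Lemma~\ref{lem:wed}: $e'_{\rho_k}$ and $e''_{\rho_k}$ are precisely the preimages of the primitive diagonal matrix units, hence primitive orthogonal idempotents, and their images sum to $E_{11}+E_{22}=\varphi(e_{\rho_k})$, giving $e_{\rho_k}=e'_{\rho_k}+e''_{\rho_k}$.

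I expect the main obstacle to be the bookkeeping of the root-of-unity sums across the two parities rather than any single hard idea. One must track the exact ranges of summation, which differ according to whether $r^n=\mathbf 1$ is split off, and verify in each case that the relevant index ($k$, $2k$, or $k\pm j$) is a nonzero residue modulo $n$; the even case $n=2m$ additionally requires the four linear characters and the tighter bound $1\le k\le m-1$ to be handled separately. These vanishing sums are exactly the orthogonality-type trigonometric identities later isolated in Section~\ref{se:tri-iden}.
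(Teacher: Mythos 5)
Your proposal is correct and rests on the same foundation as the paper's proof, namely Lemma~\ref{lem:wed}: the primitive decomposition of $e_{\rho_k}$ is obtained as the preimages of the diagonal matrix units $E_{11},E_{22}$ of the $k$-th Wedderburn block. But the execution is reversed. The paper \emph{constructs} the idempotents: it exhibits an explicit element $u=\tfrac{1}{2}(\mathbf 1-\cot k\theta\cdot s+\csc k\theta\cdot rs)$ with $\rho_k(u)=E_{22}$ and sets $e'_{\rho_k}=e_{\rho_k}\cdot u$; since $e_{\rho_k}$ is central, maps to the identity of the $k$-th block and to $0$ in every other block, this product automatically lies in $\CC[D_{2n}]e_{\rho_k}\cong M_2(\CC)$ and hits $E_{22}$ there, so no other irreducible component ever needs to be tested. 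The explicit formulas in the statement are then obtained by expanding that product against the character-table expression for $e_{\rho_k}$. You instead start from the printed formulas and \emph{verify} them, evaluating every irreducible representation on $e'_{\rho_k}$ and $e''_{\rho_k}$; this obliges you to check the vanishing of the root-of-unity sums at all $\rho_j$ with $j\neq k$ and at the one-dimensional characters (with the signed variants in the even case), which is exactly the work that multiplication by the central idempotent buys the paper for free. Both arguments are sound and your index bookkeeping ($2k$, $k\pm j$, and $k+m$ all nonzero modulo $n$ on the stated ranges) is right; your route is longer but entirely routine, and it has the mild advantage of confirming the stated coefficients directly rather than leaving the product expansion implicit, as the paper does.
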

\begin{proof}
Under the group homomorphism $\rho_k:D_{2n}\to GL(2,\CC)$, we have
$$r \mapsto \begin{pmatrix}
      \cos\frac{2k\pi}{n} &-\sin\frac{2k\pi}{n}\\[.2em]
      \sin\frac{2k\pi}{n}&\cos\frac{2k\pi}{n}
     \end{pmatrix},\quad
      s \mapsto \begin{pmatrix}
      0 &1\\
      1&0
      \end{pmatrix},\quad
      {\bf 1}\mapsto \begin{pmatrix}
      1 &0\\
      0&1
      \end{pmatrix}.$$
Therefore,
$$rs\mapsto \begin{pmatrix}
      -\sin\frac{2k\pi}{n} &\cos\frac{2k\pi}{n}\\[.2em]
      \cos\frac{2k\pi}{n}&\sin\frac{2k\pi}{n}
      \end{pmatrix},\quad
      \cos\frac{2k\pi}{n}s-rs \mapsto \begin{pmatrix}
      \sin\frac{2k\pi}{n} &0\\
      0&-\sin\frac{2k\pi}{n}
      \end{pmatrix}.$$
Thus,
$$\sin\frac{2k\pi}{n}{\bf 1}-(\cos\frac{2k\pi}{n}s-rs)\mapsto\begin{pmatrix}
        0&0\\
      0&2\sin\frac{2k\pi}{n}
      \end{pmatrix},$$
$$\sin\frac{2k\pi}{n}{\bf 1}+(\cos\frac{2k\pi}{n}s-rs)\mapsto\begin{pmatrix}
      2\sin\frac{2k\pi}{n} &0\\
      0&0
      \end{pmatrix}.$$
      Clearly, $0<\dfrac{2k\pi}{n}<\pi$, we have
$$\frac{1}{2\sin\frac{2k\pi}{n}}(\sin\frac{2k\pi}{n}{\bf 1}-\cos\frac{2k\pi}{n}s+rs)\mapsto\begin{pmatrix}
      0 &0\\
      0&1
      \end{pmatrix},$$
$$\frac{1}{2\sin\frac{2k\pi}{n}}(\sin\frac{2k\pi}{n}{\bf 1}+\cos\frac{2k\pi}{n}s-rs)\mapsto\begin{pmatrix}
      1 &0\\
      0&0
      \end{pmatrix}.$$
By Lemma~\ref{lem:wed}, we know that $F[G]e_{\rho_{k}}\cong M_{n_{k}}(F)$ as algebras, and thus
\begin{align*}
        e'_{\rho_{k}} & =e_{\rho_{k}}\cdot \frac{1}{2\sin\frac{2k\pi}{n}}(\sin\frac{2k\pi}{n}{\bf 1}-\cos\frac{2k\pi}{n}\cdot s+rs)\\
        &=\frac{1}{2}e_{\rho_{k}}\cdot ({\bf 1}-\cot k\theta\cdot s+\csc k\theta\cdot rs), \\
        e''_{\rho_{k}}&=\frac{1}{2}e_{\rho_{k}}\cdot ({\bf 1}+\cot k\theta\cdot s-\csc k\theta\cdot rs).
      \end{align*}
     We can verify that
     $$e_{\rho_{k}}=e'_{\rho_{k}}+e''_{\rho_{k}},\quad e'_{\rho_{k}}\cdot  e''_{\rho_{k}}=0,\quad  e'_{\rho_{k}}\cdot  e'_{\rho_{k}}=e'_{\rho_{k}},\quad e''_{\rho_{k}}\cdot  e''_{\rho_{k}}=e''_{\rho_{k}}.$$

(i) If $n=2m+1$, the primitive central idempotents $e_{\rho_k}$ are given as follows by Eq.~\eqref{eq:pcidem} and the character table of $D_{2n}$:
$$e_{\rho_{k}}
=\frac{2}{2m+1}\sum\limits_{l=1}^{2m+1}\cos lk\theta\cdot r^l,\quad 1\leq k \leq m.$$
Thus
$$e'_{\rho_{k}}=\frac{1}{2m+1}({\bf 1}+\sum_{l=1}^{2m}\cos lk\theta \cdot r^l+\sum_{l=1}^{2m}\sin lk\theta\cdot r^ls),\quad 1\leq k \leq m.$$
Similarly,$$e''_{\rho_{k}}=\frac{1}{2m+1}({\bf 1}+\sum\limits_{l=1}^{2m}\cos lk\theta \cdot r^l-\sum\limits_{l=1}^{2m}\sin lk\theta\cdot r^ls),\quad 1\leq k \leq m.$$

(ii) If $n=2m$, the primitive central idempotents of $D_{2n}$ are given by
$$e_{\rho_{k}}=\frac{1}{m}\sum_{l=1}^{2m}\cos lk\theta \cdot r^l,\quad 1\leq k \leq m-1.$$
Therefore,
$$e'_{\rho_{k}}=\frac{1}{2m}({\bf 1}+\sum_{l=1}^{2m-1}\cos lk\theta \cdot r^l+\sum_{l=1}^{2m-1}\sin lk\theta\cdot r^ls),\quad 1\leq k \leq m-1.$$
 Similarly, $$e''_{\rho_{k}}=\frac{1}{2m}({\bf 1}+\sum\limits_{l=1}^{2m-1}\cos lk\theta \cdot r^l-\sum\limits_{l=1}^{2m-1}\sin lk\theta\cdot r^ls),\quad 1\leq k \leq m-1.\qedhere$$
\end{proof}
\begin{exam}\label{exD8}
Let $D_{8}$ be a dihedral group with order $8$. Then $m=2$, $k=1, n=4$, there is a primitive decomposition of idempotents as follows.
\begin{align*}
e_{\rho_{1}}&=\frac{1}{2}({\bf1}-r^2),\\
e'_{\rho_{1}}&=\frac{1}{4}({\bf 1}-r^2+rs-r^3s),\\     e''_{\rho_{1}}&=\frac{1}{4}({\bf 1}-r^2-rs+r^3s).
\end{align*}
\end{exam}
\section{A primitive decomposition of idempotents of $\CC[Q_{4m}]$}\label{se:gqg-idem}
\subsection{Conjugacy classes of $Q_{4m}$}
Let $Q_{4m}$ be the generalized quaternion group of order $4m$, i.e.
$$Q_{4m}=\{a,b\,|\,a^{2m}={\bf 1},\,a^m=b^2,\,b^{-1}ab=a^{-1}\}.$$
$Q_{4m}$ has the following conjugacy classes:
$$\{{\bf1}\},\,\{a^{m}\},\,\{a^{\pm r}\,|\,1\leq r\leq m-1\},\,\{a^{2k}b\,|\,0\leq k \leq m-1\},\,\{a^{2k-1}b\,|\,0\leq k \leq m-1\}.$$

\subsection{Character table of $Q_{4m}$}
The derived subgroup $Q_{4m}^\prime=\langle a^2\rangle$. Indeed, $\langle a^2\rangle$ is a normal subgroup of $Q_{4m}$, and $|Q_{4m}/\langle a^2\rangle|=4$, hence $Q_{4m}/\langle a^2\rangle$ is abelian and $\langle a^2\rangle\supseteq Q_{4m}^\prime$. Clearly, $b^{-1}a^{-1}b=a$, thus $b^{-1}a^{-1}ba=a^2\in Q_{4m}^\prime$, as $\langle a^2\rangle\subseteq Q_{4m}^\prime$.\\
 \indent
 As $|Q_{4m}/\langle a^2\rangle|=4$, $Q_{4m}/\langle a^2\rangle\cong C_{4}$ or $Q_{4m}/\langle a^2\rangle\cong C_{2}\times C_{2}$, and $Q_{4m}$ has four irreducible one-dimensional representations. 
 Also, it has $m-1$ mutually non-equivalent two-dimensional irreducible representations \cite[Exs. 17.6, 18.3, 23.5]{JL}. We recall these two-dimensional irreducible representations of $Q_{4m}$ as follows.

Let $\varepsilon\coloneqq e^{\pi i/m}\in\mathbb{C}$ with $i\coloneqq \sqrt{-1}$. For each $k$ with $1\leq k \leq m-1$, denote matrices
$$A_{k}=\left(
\begin{array}{cc}
\varepsilon^{k} & 0 \\
0 & \varepsilon^{-k} \\
\end{array}
\right),\quad
B_{k}=\left(
\begin{array}{cc}
0 & 1 \\
(-1)^k & 0 \\
\end{array}
\right),$$
which satisfy the following relations:
$$A_{k}^{2m}=I,\quad A_{k}^{m}=B_{k}^{2},\quad B_{k}^{-1}A_{k}B_{k}=A_{k}^{-1}.$$
Hence, it follows that
\begin{equation}\label{eq:gq-2-repn}
\rho_{k}:Q_{4m}\rightarrow GL(2,\mathbb{C})
\end{equation}
defined by
$$a\mapsto A_{k},\quad  b\mapsto B_{k}$$
is a group homomorphism, and we obtain a representation $(\CC^2,\rho_{k})$ of $Q_{4m}$.

(i) When $m$ is odd, as $2\nmid m$, we know that $b^2=a^m\not\in Q_{4m}'$, hence the order of $b$ can not be 2. Then $b$ is of order 4, so $Q_{4m}/\langle a^2\rangle \cong C_{4}$. We set $\vartheta= \dfrac{\pi}{m}$, and list the character table of $Q_{4m}$:
\begin{table}[h]
\begin{tabular}{|c|c|c|c|c|c|c|c|c|}
  \hline
   & {\bf1} & $a$ & $a^2$ & $\cdots$ &$a^{m-1}$&$a^m$& $b$ & $ab$ \\
   & (1) & (2) & (2)& $\cdots$ &(2)&(1) & ($m$) & ($m$) \\
  \hline
  $\chi_{1} $ & 1&1&1&\dots & 1 & 1 & 1 & 1 \\
  $\chi_{2} $ & 1 & 1 & 1 & $\cdots$ & $1$ &$1$& $-1$ & $-1$ \\
  $\chi_{3} $ & 1 & $-1$  & 1& $\cdots$ & $(-1)^{m-1}$ & $-1$ & $i$ & $-i$ \\
 $\chi_{4} $ & 1 & $-1$ & 1 & $\cdots$ & $(-1)^{m-1}$ & $-1$ & $-i$ & $i$ \\
  $\chi_{\rho_{1}} $ & 2  &$2\cos\vartheta$ & $2\cos2\vartheta$& $\cdots$ & $2\cos (m-1)\vartheta$ &$-2$& 0 & 0 \\
  $\vdots$ & $\vdots$ & $\vdots$ & $\vdots$ & $\vdots$ & $\vdots$ & $\vdots$ & $\vdots$ & $\vdots$ \\
  $\chi_{\rho_{m-1}} $ & 2  &$2\cos(m-1)\vartheta$ & $2\cos2(m-1)\vartheta$ & $\cdots$ & $2\cos (m-1)^2\vartheta$ &$2(-1)^{m-1}$& 0 & 0 \\
  \hline
\end{tabular}
\vspace{.5em}
\caption{Irreducible characters of $Q_{4m}$}\label{tb:3}
\end{table}

(ii) When $m$ is even, as $2\,|\,m$, we have $b^2=a^m\in Q_{4m}'$. Therefore, $Q_{4m}/\langle a^2\rangle\cong C_{2}\times C_{2}$. We set $\vartheta= \dfrac{\pi}{m}$, and list the character table of $Q_{4m}$:
\begin{table}[h]
\begin{tabular}{|c|c|c|c|c|c|c|c|c|}
  \hline
   & {\bf1} & $a$ & $a^2$ & $\cdots$ &$a^{m-1}$&$a^m$& $b$ & $ab$ \\
   & (1) & (2) & (2)&$\cdots$ &(2)&(1) & ($m$) & ($m$) \\
  \hline
  $\chi_{1} $ & 1&1&1&$\cdots$ & 1 & 1 & 1 & 1 \\
  $\chi_{2} $ & 1 & 1 &1&$\cdots$ & 1 &1& $-1$ & $-1$ \\
  $\chi_{3} $ & 1 & $-1$ & 1&$\cdots$ &$(-1)^{m-1}$&$1$ & 1 & $-1$ \\
 $\chi_{4} $ & 1 & $-1$&1&$\cdots$  & $(-1)^{m-1}$ &$1$& $-1$ & 1 \\
  $\chi_{\rho_{1}} $ & 2  &$2\cos\vartheta$ & $2\cos2\vartheta$& $\cdots$ & $2\cos (m-1)\vartheta$ &$-2$& 0 & 0 \\
  $\vdots$ & $\vdots$ & $\vdots$ & $\vdots$ & $\vdots$ & $\vdots$ & $\vdots$ & $\vdots$ & $\vdots$ \\
  $\chi_{\rho_{m-1}} $ & 2  &$2\cos(m-1)\vartheta$ & $2\cos2(m-1)\vartheta$ & $\cdots$ & $2\cos (m-1)^2\vartheta$ &$2(-1)^{m-1}$& 0 & 0 \\
  \hline
\end{tabular}
\vspace{.5em}
\caption{Irreducible characters of $Q_{4m}$}\label{tb:4}
\end{table}

\subsection{A primitive decomposition of idempotents}
First applying Eq.~\eqref{eq:pcidem} to the case of generalized quaternion group $Q_{4m}$, we have
\begin{prop}\label{prop:gqg-idem}
Let $Q_{4m}$ be the generalized quaternion group of order $4m$. The primitive central idempotents corresponding to the one-dimensional irreducible representations of $Q_{4m}$ are as follows.
\begin{enumerate}[(i)]
\item
When $m$ is odd,
\begin{align*}
e_{1}&=\frac{1}{4m}(\sum\limits_{l=1}^{2m} a^l+\sum\limits_{l=1}^{2m} a^lb),\\
e_{2}&=\frac{1}{4m}(\sum\limits_{l=1}^{2m} a^l-\sum\limits_{l=1}^{2m} a^lb),\\
e_{3}&=\frac{1}{4m}[{\bf 1}+i\sum_{l=1}^{2m}(-1)^l\cdot a^lb+\sum_{l=1}^{m-1}(-1)^l\cdot (a^l+a^{-l})- a^m],\\
e_{4}&=\frac{1}{4m}[{\bf 1}+i\sum_{l=1}^{2m}(-1)^{l+1}\cdot a^lb+\sum_{l=1}^{m-1}(-1)^l\cdot (a^l+a^{-l})- a^m].
\end{align*}
\item
When $m$ is even,
\begin{align*}
e_{1}&=\frac{1}{4m}(\sum\limits_{l=1}^{2m} a^l+\sum\limits_{l=1}^{2m} a^lb),\\
e_{2}&=\frac{1}{4m}(\sum\limits_{l=1}^{2m} a^l-\sum\limits_{l=1}^{2m} a^lb),\\
e_{3}&=\frac{1}{4m}[{\bf 1}+\sum_{l=1}^{2m}(-1)^l\cdot a^lb+\sum_{l=1}^{m-1}(-1)^l\cdot (a^l+a^{-l})+a^m],\\
e_{4}&=\frac{1}{4m}[{\bf 1}+\sum_{l=1}^{2m}(-1)^{l+1}\cdot a^lb+\sum_{l=1}^{m-1}(-1)^l\cdot (a^l+a^{-l})+a^m].
\end{align*}
\end{enumerate}
\end{prop}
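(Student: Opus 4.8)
The plan is to apply the general formula \eqref{eq:pcidem} directly to each of the four one-dimensional irreducible characters $\chi_1,\dots,\chi_4$ recorded in Tables~\ref{tb:3} and~\ref{tb:4}, and then to repackage the resulting $4m$-term sum into the symmetric closed form stated. Since each $\chi_j$ is one-dimensional we have $\chi_j(\mathbf{1})=1$, so \eqref{eq:pcidem} reduces to $e_j=\frac{1}{4m}\sum_{g\in Q_{4m}}\chi_j(g^{-1})\,g$; moreover $\chi_j(g^{-1})=\overline{\chi_j(g)}$, since the value of a one-dimensional character of a finite group is a root of unity.

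First I would split the group into the two cosets $\{a^l:0\le l\le 2m-1\}$ and $\{a^lb:0\le l\le 2m-1\}$ and compute the two families of inverses. For the first, $(a^l)^{-1}=a^{-l}$. For the second, the key identity is $(a^lb)^{-1}=a^{l+m}b$, which I would derive from the defining relations: from $b^2=a^m$ one gets $b^{-1}=b^3=a^mb$, and the twisting $b^{-1}ab=a^{-1}$ iterates to $a^lb=ba^{-l}$, i.e. $ba^{-l}=a^lb$, whence $(a^lb)^{-1}=b^{-1}a^{-l}=a^m(ba^{-l})=a^{l+m}b$. Writing $\alpha_j:=\chi_j(a)\in\{\pm1\}$ and $\beta_j:=\chi_j(b)$, the character values are $\chi_j(a^l)=\alpha_j^{\,l}$ and $\chi_j(a^lb)=\alpha_j^{\,l}\beta_j$, and using $\chi_j((a^lb)^{-1})=\alpha_j^{\,l+m}\beta_j$ the formula \eqref{eq:pcidem} becomes
\[
e_j=\frac{1}{4m}\Bigl(\sum_{l=1}^{2m}\alpha_j^{\,l}\,a^l+\alpha_j^{\,m}\beta_j\sum_{l=1}^{2m}\alpha_j^{\,l}\,a^lb\Bigr),
\]
after reindexing with $a^{2m}=\mathbf{1}$.

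The remaining step is pure bookkeeping: substitute the four pairs $(\alpha_j,\beta_j)$ read from the tables — namely $(1,1)$ and $(1,-1)$ for $\chi_1,\chi_2$, and $(-1,\pm i)$ when $m$ is odd respectively $(-1,\pm1)$ when $m$ is even for $\chi_3,\chi_4$ — and fold the sum $\sum_{l=1}^{2m}\alpha_j^{\,l}a^l$ into a symmetric expression. For $\chi_1,\chi_2$ (where $\alpha_j=1$) no folding is needed. For $\chi_3,\chi_4$ (where $\alpha_j=-1$) I would pair the index $l$ with $2m-l$ and use $(-1)^{2m-l}=(-1)^l$ to obtain $\sum_{l=1}^{2m}(-1)^l a^l=\mathbf{1}+\sum_{l=1}^{m-1}(-1)^l(a^l+a^{-l})+(-1)^m a^m$, the sign $(-1)^m$ being exactly what separates the $m$ odd and $m$ even cases and producing the $\mp a^m$ in the statement. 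In parallel, the prefactor $\alpha_j^{\,m}\beta_j$ evaluates to $\pm i$ (odd $m$) or $\pm1$ (even $m$), giving the coefficient in front of the $b$-sum; collecting terms reproduces the four displayed formulas (up to the immaterial interchange of the labels $\chi_3\leftrightarrow\chi_4$ fixed by the choice of $\beta_j\in\{i,-i\}$).

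I expect the only genuine subtlety to be the inverse computation $(a^lb)^{-1}=a^{l+m}b$ together with the induced phase $\alpha_j^{\,l+m}\beta_j$: this is where both relations $b^{-1}ab=a^{-1}$ and $b^2=a^m$ enter, and it is what converts $\beta_j$ into the effective coefficient $\alpha_j^{\,m}\beta_j$ and hence accounts for the appearance of $\pm i$ versus $\pm1$, i.e. for the two different shapes of $e_3,e_4$ in the two parity cases. Everything else is routine reindexing, and the orthogonality and idempotency of the resulting $e_j$ are automatic from the Wedderburn decomposition of Theorem~\ref{thn:wed}.
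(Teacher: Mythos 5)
Your proposal is correct and follows exactly the route the paper takes: the paper obtains Proposition~\ref{prop:gqg-idem} by simply applying Eq.~\eqref{eq:pcidem} to the character tables, and your computation of $(a^lb)^{-1}=a^{l+m}b$ together with the folding of $\sum_{l}(-1)^l a^l$ into $\mathbf{1}+\sum_{l=1}^{m-1}(-1)^l(a^l+a^{-l})+(-1)^m a^m$ merely supplies the bookkeeping the paper leaves implicit. The only caveat, which you already flag, is the labeling of $e_3$ versus $e_4$ in the odd-$m$ case, fixed by the choice of $\chi_3(b)\in\{i,-i\}$.
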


For other primitive idempotents corresponding to two-dimensional irreducible representations of $Q_{4m}$, we have
\begin{theorem}\label{th:gqg-idem}
Let $Q_{4m}$ be the generalized quaternion group of order $4m$. Then 
we have the following primitive decomposition $e_{\rho_{k}}=e'_{\rho_{k}}+e''_{\rho_{k}}$ of the primitive central idempotent $e_{\rho_{k}}$ corresponding to the two-dimensional irreducible representation $(\CC^2,\rho_k)$ of $Q_{4m}$ defined in Eq.~\eqref{eq:gq-2-repn} for $k=1,\dots,m-1$.
\begin{enumerate}[(i)]
\item
When $k$ is odd,
\begin{align*}
e_{\rho_{k}}&=\frac{1}{m}\sum_{l=1}^{2m}\cos lk\vartheta\cdot a^l,\\
e'_{\rho_{k}}&=-\frac{1}{2mi\sin k\vartheta}\sum_{l=1}^{2m}(\varepsilon^{k}a^{m+l}-a^{m+l-1})\cos lk\vartheta,\\
e''_{\rho_{k}}&=-\frac{1}{2mi\sin k\vartheta}\sum_{l=1}^{2m}(a^{m+l-1}-\varepsilon^{-k}a^{m+l})\cos lk\vartheta,
\end{align*}
with $\vartheta = \dfrac {\pi}{m}$ and $1\leq k \leq m-1$;
\item
When $k$ is even,
\begin{align*}
e_{\rho_{k}}&=\frac{1}{m}\sum_{l=1}^{2m}\cos lk\vartheta\cdot a^l,\\
e'_{\rho_{k}}&=\frac{1}{2mi\sin k\vartheta}\sum_{l=1}^{2m}(\varepsilon^{k}a^{m+l}-a^{m+l-1})\cos lk\vartheta,\\
e''_{\rho_{k}}&= \frac{1}{2mi\sin k\vartheta}\sum\limits_{l=1}^{2m}(a^{m+l-1}-\varepsilon^{-k}a^{m+l})\cos lk\vartheta,
\end{align*}
with $\vartheta = \dfrac {\pi}{m}$ and $1\leq k \leq m-1$.
\end{enumerate}
\end{theorem}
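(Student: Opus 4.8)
The plan is to apply Lemma~\ref{lem:wed} to the two-dimensional representation $(\CC^2,\rho_k)$ of Eq.~\eqref{eq:gq-2-repn}, exactly as in the proof of Theorem~\ref{th:di-idem} for the dihedral case. Since $\rho_k$ restricts to an algebra isomorphism $\CC[Q_{4m}]e_{\rho_k}\stackrel{\sim}{\to}M_2(\CC)$ carrying $e_{\rho_k}$ to the identity matrix, it is enough to produce group-algebra elements $u',u''$ with $\rho_k(u')=E_{11}$ and $\rho_k(u'')=E_{22}$. Then $e'_{\rho_k}:=e_{\rho_k}u'$ and $e''_{\rho_k}:=e_{\rho_k}u''$ lie in the ideal $\CC[Q_{4m}]e_{\rho_k}$ by centrality of $e_{\rho_k}$, map to $E_{11}$ and $E_{22}$, and therefore furnish the desired orthogonal primitive idempotents with $e'_{\rho_k}+e''_{\rho_k}=e_{\rho_k}$.

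To find $u'$ and $u''$, I would first record $\rho_k(a^l)=\mathrm{diag}(\varepsilon^{kl},\varepsilon^{-kl})$. The key observation is the one-sided cancellation
$$\rho_k(\varepsilon^{k}a-\mathbf{1})=\mathrm{diag}(\varepsilon^{2k}-1,\,0),\qquad \rho_k(\varepsilon^{-k}a-\mathbf{1})=\mathrm{diag}(0,\,\varepsilon^{-2k}-1),$$
where the opposite diagonal entry vanishes because $\varepsilon^{k}\varepsilon^{-k}=1$. As $1\le k\le m-1$ forces $0<k\vartheta<\pi$, we have $\sin k\vartheta\neq0$ and $\varepsilon^{\pm2k}-1=\pm 2i\sin k\vartheta\cdot\varepsilon^{\pm k}$, so after normalization $u'=\frac{1}{2i\sin k\vartheta}(a-\varepsilon^{-k})$ maps to $E_{11}$ and $u''=-\frac{1}{2i\sin k\vartheta}(a-\varepsilon^{k})$ maps to $E_{22}$. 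This is the quaternionic analogue of the elements $\tfrac12(\mathbf{1}\mp\cot k\theta\cdot s\pm\csc k\theta\cdot rs)$ used for $D_{2n}$.

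For the closed forms I would substitute the character-table value $e_{\rho_k}=\frac1m\sum_{l=1}^{2m}\cos lk\vartheta\cdot a^{l}$ coming from Eq.~\eqref{eq:pcidem} and multiply out $e_{\rho_k}u'$ and $e_{\rho_k}u''$ inside the commutative subalgebra $\CC[\langle a\rangle]$. Conceptually, $u'$ and $u''$ already lie in $\CC[\langle a\rangle]$, and $e'_{\rho_k},e''_{\rho_k}$ turn out to be precisely the minimal idempotents of $\CC[\langle a\rangle]\cong\CC[\ZZ/2m\ZZ]$ attached to the two linear characters $a\mapsto\varepsilon^{\pm k}$ appearing in $\rho_k$, namely $\frac{1}{2m}\sum_{l}\varepsilon^{\mp kl}a^{l}$; in that subalgebra idempotency and orthogonality are immediate, while primitivity in $\CC[Q_{4m}]$ is inherited through Lemma~\ref{lem:wed}. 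To reach the printed expressions one rewrites these sums with the shifted indexing $a^{m+l},a^{m+l-1}$ and uses the relation $a^{m}\mapsto(-1)^k$ (equivalently $\varepsilon^{km}=(-1)^k$) together with the orthogonality sum $\sum_{l=1}^{2m}\varepsilon^{kl}\cos lk\vartheta=m$.

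The main obstacle is bookkeeping rather than ideas: one must track the factor $(-1)^{k}=\varepsilon^{km}$ carefully, since it is exactly this factor — absorbed into the overall sign $\mp\frac{1}{2mi\sin k\vartheta}$ — that forces the statement to split into the $k$ odd and $k$ even cases with opposite signs. Verifying the trigonometric sum $\sum_{l=1}^{2m}\varepsilon^{kl}\cos lk\vartheta=m$ (which rests on $\sum_{l=1}^{2m}\cos 2lk\vartheta=\sum_{l=1}^{2m}\sin 2lk\vartheta=0$ for $1\le k\le m-1$) and confirming that the resulting purely-$a$-power expression genuinely equals $e_{\rho_k}u'$ are the only points demanding real care.
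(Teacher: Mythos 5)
Your proposal is correct and follows the same skeleton as the paper's proof: invoke Lemma~\ref{lem:wed}, exhibit explicit preimages of the matrix units $E_{11},E_{22}$ under $\rho_k$, and multiply them by $e_{\rho_k}$ (computed from Eq.~\eqref{eq:pcidem}). The one genuine difference is in how the diagonal matrix units are produced. The paper first builds the off-diagonal units $E_{21},E_{12}$ as images of $\frac{1}{\varepsilon^{-k}-\varepsilon^{k}}(\varepsilon^{\pm k}b-ab)$ and then left-multiplies by $b$, which, after the relations $b^{2}=a^{m}$ and $bab=a^{m-1}$, lands on the element $\frac{1}{\varepsilon^{-k}-\varepsilon^{k}}\,a^{m-1}(\varepsilon^{k}a-{\bf 1})$; this is exactly why the printed formulas carry the shifted powers $a^{m+l},a^{m+l-1}$ and why the sign $(-1)^{k}=\varepsilon^{km}$ splits the statement into the $k$ odd/even cases. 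You instead exploit that $\rho_k(a)$ is already diagonal and take $u'=\frac{1}{2i\sin k\vartheta}(a-\varepsilon^{-k}{\bf 1})$, $u''=-\frac{1}{2i\sin k\vartheta}(a-\varepsilon^{k}{\bf 1})$, which is cleaner and avoids $b$ entirely. Since the restriction of $\rho_k$ to $\CC[Q_{4m}]e_{\rho_k}$ is injective, $e_{\rho_k}x$ depends only on $\rho_k(x)$, so your $e_{\rho_k}u'$, $e_{\rho_k}u''$ coincide with the paper's expressions; your remaining reindexing work (using $\varepsilon^{km}=(-1)^k$ and $\sum_{l=1}^{2m}\varepsilon^{kl}\cos lk\vartheta=m$) is routine and correct. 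Your added observation that $e'_{\rho_k},e''_{\rho_k}$ are precisely the minimal idempotents $\frac{1}{2m}\sum_{l}\varepsilon^{\mp kl}a^{l}$ of $\CC[\langle a\rangle]$ attached to the characters $a\mapsto\varepsilon^{\pm k}$ is a worthwhile conceptual bonus (one checks they are annihilated by every other irreducible representation of $Q_{4m}$, hence already lie in $\CC[Q_{4m}]e_{\rho_k}$), and it makes idempotency and orthogonality transparent; the paper does not make this identification. Incidentally, the paper's displayed images of $\frac{b}{\varepsilon^{-k}-\varepsilon^{k}}(\varepsilon^{k}b-ab)$ and $-\frac{b}{\varepsilon^{-k}-\varepsilon^{k}}(\varepsilon^{-k}b-ab)$ have $E_{11}$ and $E_{22}$ interchanged, a typo your computation does not reproduce.
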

\begin{proof}
(i) When $k$ is odd, under the group homomorphism $\rho_k:Q_{4m}\to GL(2,\CC)$, we have
$$a \mapsto \begin{pmatrix}
      \varepsilon^{k} &0\\
      0& \varepsilon^{-k}
     \end{pmatrix},\quad
      b \mapsto \begin{pmatrix}
      0 &1\\
      -1&0
      \end{pmatrix}.$$
Then
$$ab\mapsto \begin{pmatrix}
      0 &\varepsilon^{k}\\
      -\varepsilon^{-k}&0
      \end{pmatrix},\quad
      \varepsilon^{k}b \mapsto \begin{pmatrix}
      0 &\varepsilon^{k}\\
      -\varepsilon^{k}&0
      \end{pmatrix},\quad
      \varepsilon^{-k}b \mapsto \begin{pmatrix}
      0 &\varepsilon^{-k}\\
      -\varepsilon^{-k}&0
      \end{pmatrix}.$$
Therefore,
$$\varepsilon^{k}b-ab\mapsto\begin{pmatrix}
        0&0\\
      \varepsilon^{-k}-\varepsilon^{k}&0
      \end{pmatrix},\quad
      \varepsilon^{-k}b-ab\mapsto\begin{pmatrix}
        0&\varepsilon^{-k}-\varepsilon^{k}\\
      0&0
      \end{pmatrix}.$$
As $\varepsilon^{-k}-\varepsilon^{k}\neq0$,
it implies that
\begin{align*}
\frac{1}{\varepsilon^{-k}-\varepsilon^{k}}(\varepsilon^{k}b-ab)&\mapsto\begin{pmatrix}
        0&0\\
      1&0
      \end{pmatrix},\\
\frac{1}{\varepsilon^{-k}-\varepsilon^{k}}(\varepsilon^{-k}b-ab)&\mapsto\begin{pmatrix}
        0&1\\
      0&0
      \end{pmatrix}.
\end{align*}
As a result, we have
\begin{align*}
\frac{b}{\varepsilon^{-k}-\varepsilon^{k}}(\varepsilon^{k}b-ab)&\mapsto\begin{pmatrix}
        0&0\\
      0&1
      \end{pmatrix},\\
-\frac{b}{\varepsilon^{-k}-\varepsilon^{k}}(\varepsilon^{-k}b-ab)&\mapsto\begin{pmatrix}
        1&0\\
      0&0
      \end{pmatrix}.
\end{align*}
Now one can compute the primitive central idempotents $e_{\rho_{k}}$ via the character table, and then get their desired primitive decompositions by the similar argument as in the proof of Theorem \ref{th:di-idem}.

(ii) When $k$ is even, by similar arguments as in the case when $k$ is odd.
\end{proof}

\begin{exam}\label{exQ8}
Let $Q_{8}$ be a generalized quaternion group of order $8$, then $k=1$, $m=2$. Therefore,
\begin{align*}
e_{\rho_{1}}&=\frac{1}{2}({\bf1}-a^2), \\
e'_{\rho_{1}}&=-\frac{1}{4i}(a^3+i\cdot a^2-a-i\cdot{\bf1}),\\
 e''_{\rho_{1}}&=-\frac{1}{4i}(-a^3+i\cdot a^2+a-i\cdot{\bf1}).
\end{align*}
\end{exam}

\section{Trigonometric identities}\label{se:tri-iden}
Here we find the following two sets of trigonometric identities covering the orthogonality relations in the character tables of dihedral groups and generalized quaternion groups.
 \begin{prop}\label{prop4.1}
 For any $n\geq1$ and $1\leq k \leq n-1$, and any angle $\theta$ which is not an integer multiple of\, $2\pi$, we have
 \begin{enumerate}[(i)]
\item
$\displaystyle\sum\limits_{r=0}^{n-1} (-1)^r \cos \dfrac{rk\pi}{n}=
\begin{cases}
 1, & \text{ $n+k$ odd},\\
 0, & \text{ $n+k$ even};
 \end{cases}$
\item
$\displaystyle\sum\limits_{r=1}^{n}  \cos r\theta
=\dfrac{\sin(\frac{\theta}{2}+n\theta)}{2\sin\frac{\theta}{2}}
-\dfrac{1}{2}$.
 \end{enumerate}
 \end{prop}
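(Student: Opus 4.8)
The plan is to handle (ii) first by the standard Dirichlet-kernel telescoping, and then to treat (i) as a finite geometric series (which could alternatively be reduced to (ii)). For (ii), since $\theta$ is not an integer multiple of $2\pi$, we have $\sin\frac{\theta}{2}\neq 0$, so I would multiply $\sum_{r=1}^{n}\cos r\theta$ through by $2\sin\frac{\theta}{2}$ and apply the product-to-sum identity
\[
2\sin\tfrac{\theta}{2}\cos r\theta=\sin\bigl((r+\tfrac12)\theta\bigr)-\sin\bigl((r-\tfrac12)\theta\bigr).
\]
The right-hand side telescopes to $\sin\bigl((n+\tfrac12)\theta\bigr)-\sin\frac{\theta}{2}$; dividing back by $2\sin\frac{\theta}{2}$ and rewriting $(n+\tfrac12)\theta=\frac{\theta}{2}+n\theta$ yields the stated formula. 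This part is entirely routine.

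For (i), I would write $(-1)^r=\cos r\pi$ so that each summand $(-1)^r\cos\frac{rk\pi}{n}$ is the real part of $e^{ir\pi}e^{irk\pi/n}=z^{r}$, where $z=e^{i\pi(n+k)/n}$. Then $\sum_{r=0}^{n-1}(-1)^r\cos\frac{rk\pi}{n}=\operatorname{Re}\sum_{r=0}^{n-1}z^{r}$ is a geometric series. The crucial observations are that $z^{n}=e^{i\pi(n+k)}=(-1)^{n+k}$, and that the hypothesis $1\le k\le n-1$ forces $1<\frac{n+k}{n}<2$, hence $z\neq1$, so that $\sum_{r=0}^{n-1}z^{r}=\frac{z^{n}-1}{z-1}$ is legitimate. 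When $n+k$ is even, $z^{n}=1$ makes the numerator vanish and the sum is $0$. When $n+k$ is odd, $z^{n}=-1$ gives $\frac{-2}{z-1}$, and factoring $z-1=2i\sin\frac{\phi}{2}\,e^{i\phi/2}$ with $\phi=\pi(n+k)/n$ turns this into $1+i\cot\frac{\phi}{2}$, whose real part is exactly $1$. Taking real parts then reproduces the two cases.

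The main points requiring care will be the degenerate situations that make denominators vanish, namely the hypothesis $\sin\frac{\theta}{2}\neq 0$ in (ii) and the inequality $z\neq 1$ in (i); the latter is precisely where the range $1\le k\le n-1$ is needed. Beyond that, the only slightly delicate step is the clean extraction of the real part in the odd case of (i), and everything else reduces to a short computation.
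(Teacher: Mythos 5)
Your argument for (ii) is exactly the paper's: multiply by $2\sin\frac{\theta}{2}$, apply the product-to-sum identity, telescope, and divide back using $\sin\frac{\theta}{2}\neq 0$. For (i), however, you take a genuinely different route. The paper stays entirely real: it multiplies the sum by $\cos\frac{k\pi}{2n}$, converts each term $(-1)^r\cos\frac{rk\pi}{n}\cos\frac{k\pi}{2n}$ into a half-sum of cosines at the shifted arguments $\frac{(2r\pm1)k\pi}{2n}$, telescopes the resulting alternating sum down to $\frac{1}{2}[1+(-1)^{n+k-1}]\cos\frac{k\pi}{2n}$, and divides by $\cos\frac{k\pi}{2n}\neq 0$ (valid since $0<\frac{k\pi}{2n}<\frac{\pi}{2}$), so that both proofs share the same ``multiply by a cleverly chosen factor and telescope'' template. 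You instead recognize the sum as $\operatorname{Re}\sum_{r=0}^{n-1}z^r$ with $z=e^{i\pi(n+k)/n}$ and evaluate the geometric series; your verification is correct --- the range $1\le k\le n-1$ gives $z\neq 1$, $z^n=(-1)^{n+k}$ splits the two cases immediately, and in the odd case $\frac{-2}{z-1}=1+i\cot\frac{\phi}{2}$ with $\phi=\frac{(n+k)\pi}{n}$ indeed has real part $1$ (note $\sin\frac{\phi}{2}\neq0$ since $\frac{\phi}{2}\in(\frac{\pi}{2},\pi)$). The complex-exponential approach is more systematic and makes the case distinction transparent at the level of $z^n=\pm1$, at the cost of a slightly fiddly real-part extraction; the paper's approach keeps everything real and uniform with the proof of (ii), at the cost of having to guess the auxiliary factor $\cos\frac{k\pi}{2n}$ and check it is nonzero. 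Both are complete and correct.
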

\begin{proof}
(i) Note that
$$\cos(\frac{rk\pi}{n}\pm \frac{k\pi}{2n})=\cos\frac{rk\pi}{n}\cos\frac{k\pi}{2n}\mp \sin\frac{rk\pi}{n}\sin\frac{k\pi}{2n}$$
imply the following product-to-sum identity
$$\cos\frac{rk\pi}{n}\cos\frac{k\pi}{2n}=\frac{1}{2}[\cos\frac{(2r+1)k\pi}{2n}+\cos\frac{(2r-1)k\pi}{2n}].$$
As a result, we have
\begin{align*}
\sum\limits_{r=0}^{n-1} (-1)^r \cos \frac{rk\pi}{n}\cos\frac{k\pi}{2n}
&=\cos\frac{k\pi}{2n}+\sum\limits_{r=1}^{n-1} (-1)^r\cdot \frac{1}{2}[\cos\frac{(2r+1)k\pi}{2n}+\cos\frac{(2r-1)k\pi}{2n}]\\
&=\cos\frac{k\pi}{2n}+\frac{1}{2}\sum\limits_{r=1}^{n-1}(-1)^{r}\cos\frac{(2r+1)k\pi}{2n}
+\frac{1}{2}\sum\limits_{r=0}^{n-2}(-1)^{r+1}\cos\frac{(2r+1)k\pi}{2n}\\
&=\cos\frac{k\pi}{2n}+\frac{1}{2}(-1)^{n-1}\cos\frac{(2n-1)k\pi}{2n}-\frac{1}{2}\cos\frac{k\pi}{2n}\\
&=\frac{1}{2}\cos\frac{k\pi}{2n}+\frac{1}{2}(-1)^{n-1}\cos(k\pi-\frac{k\pi}{2n})\\
&=\frac{1}{2}[1+(-1)^{n+k-1}]\cos\frac{k\pi}{2n}.
\end{align*}
Since $\cos\dfrac{k\pi}{2n}\neq 0$ for any $1\leq k \leq n-1$, we see that
$$\sum\limits_{r=0}^{n-1} (-1)^r \cos \frac{rk\pi}{n}=\frac{1}{2}[1+(-1)^{n+k-1}]=
\begin{cases}
 1, & \text{ $n+k$ odd},\\
 0, & \text{ $n+k$ even}.
 \end{cases}$$

(ii) Similarly by product-to-sum identities, we see that
\begin{align*}
2\sin \frac{\theta}{2}\sum\limits_{r=1}^{n}\cos r\theta&=2\sin\frac{\theta}{2}\cos \theta+\dots+2\sin\frac{\theta}{2}\cos n\theta\\
&=\sin\frac{3\theta}{2}-\sin\frac{\theta}{2}+\dots+\sin(n\theta+\frac{\theta}{2})-\sin(n\theta-\frac{\theta}{2})\\
&=\sin(n\theta+\frac{\theta}{2})-\sin\frac{\theta}{2}.
\end{align*}
Since $\theta$ is not an integer multiple of $2\pi$, we obtain that
$$\sum\limits_{r=1}^{n}  \cos r\theta=\dfrac{\sin(\frac{\theta}{2}+n\theta)}{2\sin\frac{\theta}{2}}-\frac{1}{2}.\qedhere$$
\end{proof}

Next we clarify how these identities are connected to the character tables of dihedral groups and generalized quaternion groups.
\begin{exam}
Using the first orthogonality relation in the character tables of $D_{2n}$ in Table~\ref{tb:1} when $n=2m+1$ and $\theta=\dfrac{2\pi}{2m+1}$, we have
$$\langle\chi_{1},\chi_{\rho_{k}}\rangle=\frac{1}{4m+2}[2+4\sum\limits_{r=1}^{m}\cos kr\theta]=0,\quad 1\leq k\leq m.$$
The resulting identities
$$\sum\limits_{r=1}^{m}\cos kr\theta=-\frac{1}{2},\quad 1\leq k\leq m,$$
and the identities due to $\langle\chi_{1},\chi_{\rho_{k}}\rangle=0$ in Table~\ref{tb:2} are all special cases of Prop.~\ref{prop4.1} (ii). What's more,
$$\langle\chi_{\rho_a},\chi_{\rho_{b}}\rangle=\frac{1}{4m+2}[4+8\sum\limits_{r=1}^{m}\cos ar\theta\cos br\theta]=0, \quad 1\leq a,b\leq m,\,a\neq b.$$
That is,
$$\sum\limits_{r=1}^{m}\cos ar\theta\cos br\theta=-\frac{1}{2},$$
which can also be deduced by Prop.~\ref{prop4.1} (ii).
\end{exam}

\begin{exam}
Using the first orthogonality relation in the character tables of $Q_{4m}$ in Table~\ref{tb:3} and Table~\ref{tb:4},
when $m$ is odd,$$\langle\chi_{3},\chi_{\rho_{k}}\rangle=\frac{1}{4m}[2+4\sum\limits_{r=1}^{m-1}(-1)^r \cos \dfrac{kr\pi}{m}+2(-1)^{k+1}]=0,\quad 1\leq k\leq m-1.$$
When $m$ is even,$$\langle\chi_{3},\chi_{\rho_{k}}\rangle=\frac{1}{4m}[2+4\sum\limits_{r=1}^{m-1}(-1)^r \cos \dfrac{kr\pi}{m}+2(-1)^{k}]=0,\quad 1\leq k\leq m-1.$$
That means
$$\sum\limits_{r=1}^{m-1} (-1)^r \cos \dfrac{kr\pi}{m}=\begin{cases}
 0, & \text{ $m+k$ odd},\\
 -1, & \text{ $m+k$ even},
 \end{cases}$$
equivalent to Prop~\ref{prop4.1} (i). The identities by $\langle\chi_{3},\chi_{\rho_{k}}\rangle=0$ in Table~\ref{tb:2} are the same.
Also, we have
$$\langle\chi_{\rho_{a}},\chi_{\rho_{b}}\rangle=\frac{1}{4m}[4+8\sum\limits_{r=1}^{m-1}\cos \dfrac{ar\pi}{m}\cos \dfrac{br\pi}{m}+4(-1)^{a+b}]=0,\quad 1\leq a,b\leq m-1,\,a\neq b.$$
That is,
$$\sum\limits_{r=1}^{m-1}\cos \dfrac{ar\pi}{m} \cos \dfrac{br\pi}{m}=\begin{cases}
 0, & \text{ $a+b$ odd},\\
 -1, & \text{ $a+b$ even},
 \end{cases}$$
which can also be deduced by Prop.~\ref{prop4.1} (ii).
\end{exam}

\section{A group algebra isomorphism between $\mathbb{C}[Q_{8}]$ and $\mathbb{C}[D_{8}]$}\label{se:alg-isom}
In this section we would like to specifically describe a group algebra isomorphism between $\mathbb{C}[Q_{8}]$ and $\mathbb{C}[D_{8}]$, offering a correspondence between two complete sets of their primitive orthogonal idempotents given in Prop.~\ref{prop:di-idem}, Theorem~\ref{th:di-idem} and Prop.~\ref{prop:gqg-idem}, Theorem~\ref{th:gqg-idem} respectively.
\begin{theorem}\label{th:alg-isom}
There is an algebra isomorphism
$$\psi: \mathbb{C}[Q_{8}]\rightarrow \mathbb{C}[D_{8}]$$
mapping any
$\alpha=x_{0}\cdot{\bf 1}+x_{1}\cdot a^2+x_{2}\cdot a+x_{3}\cdot a^3+x_{4}\cdot b+x_{5}\cdot a^2b+x_{6}\cdot ab+x_{7}\cdot a^3b$ to
\begin{align*}
\psi(\alpha)&=x_{0}\cdot{\bf1}+x_{7}\cdot r+x_{1}\cdot r^2+x_{6}\cdot r^3+\frac{1}{2}(x_{2}+x_{3}-ix_{4}+ix_{5})\cdot s+\frac{1}{2}(-ix_{2}+ix_{3}+x_{4}+x_{5})\cdot rs\\
&\quad+\frac{1}{2}(x_{2}+x_{3}+ix_{4}-ix_{5})\cdot r^2s+
\frac{1}{2}(ix_{2}-ix_{3}+x_{4}+x_{5})\cdot r^3s,
\end{align*}
with $i\coloneqq \sqrt{-1}$ and $x_{i}\in \mathbb{C}$.
\end{theorem}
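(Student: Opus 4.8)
The plan is to exploit the universal property of the group algebra: an algebra homomorphism out of $\CC[Q_8]$ is the same datum as a group homomorphism from $Q_8$ into the unit group $\CC[D_8]^\times$. So I would first read off from the displayed formula the images of the generators,
\[
u\coloneqq\psi(a)=\tfrac12(s-i\,rs+r^2s+i\,r^3s),\qquad v\coloneqq\psi(b)=\tfrac12(-i\,s+rs+i\,r^2s+r^3s),
\]
and verify that they satisfy the defining relations $u^4=\bfone$, $u^2=v^2$ and $v^{-1}uv=u^{-1}$ of $Q_8$ inside $\CC[D_8]$. The key simplification is that any product of two reflections obeys $(r^j s)(r^k s)=r^{j-k}$, so multiplying two reflection-combinations $\tfrac12\sum_j c_j r^j s$ and $\tfrac12\sum_k d_k r^k s$ reduces to the cyclic convolution of the coefficient vectors. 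Applying this to $c=(1,-i,1,i)$ gives $u^2=r^2$, and likewise $v^2=r^2$, so $u^2=v^2$ holds and $u^4=(r^2)^2=\bfone$; in particular $u,v$ are units with $u^{-1}=u^3$, $v^{-1}=v^3$.

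For the last relation I would compute $uv$ and $vu^{3}$ separately by the same convolution and check that both equal $r^{3}$, which yields $uv=vu^{3}=vu^{-1}$, i.e. $v^{-1}uv=u^{-1}$. This confirms $a\mapsto u$, $b\mapsto v$ is a group homomorphism $Q_8\to\CC[D_8]^\times$, hence extends to an algebra homomorphism $\psi\colon\CC[Q_8]\to\CC[D_8]$. I would then check that this $\psi$ agrees with the stated formula on the whole basis by evaluating the remaining images as words in $u,v$: one finds $\psi(a^2)=u^2=r^2$, $\psi(a^3)=u^3=\tfrac12(s+i\,rs+r^2s-i\,r^3s)$, $\psi(ab)=uv=r^3$, $\psi(a^3b)=u^3v=r$ and $\psi(a^2b)=u^2v=\tfrac12(i\,s+rs-i\,r^2s+r^3s)$, matching the coefficients in the theorem. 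This step is purely mechanical given the multiplication rules $s^2=\bfone$, $r^4=\bfone$, $sr=r^{-1}s$.

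Finally I would establish bijectivity. Since $\dim_\CC\CC[Q_8]=\dim_\CC\CC[D_8]=8$, it suffices to show $\psi$ sends a basis to a basis. Observe that $\psi$ respects the splitting of $\CC[D_8]$ into the rotation part $\mathrm{span}\{\bfone,r,r^2,r^3\}$ and the reflection part $\mathrm{span}\{s,rs,r^2s,r^3s\}$: the images $\psi(\bfone),\psi(a^2),\psi(ab),\psi(a^3b)$ equal $\bfone,r^2,r^3,r$ and plainly form a basis of the rotation part, while $\psi(a),\psi(a^3),\psi(b),\psi(a^2b)$ land in the reflection part with coordinate matrix
\[
\tfrac12\begin{pmatrix}
1 & -i & 1 & i\\
1 & i & 1 & -i\\
-i & 1 & i & 1\\
i & 1 & -i & 1
\end{pmatrix}
\]
relative to $(s,rs,r^2s,r^3s)$. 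A short determinant computation (the matrix has a discrete-Fourier/Hadamard type structure and determinant $-1$) shows these four images are linearly independent, so $\psi$ is an isomorphism.

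The main obstacle I anticipate is the relation verification in the first two paragraphs, and especially the conjugation identity $v^{-1}uv=u^{-1}$: because $u$ and $v$ are genuine $\CC$-linear combinations of all four reflections rather than single group elements, their products expand into sixteen terms, and it is the precise sign and phase pattern of the coefficient vectors $c=(1,-i,1,i)$ and $d=(-i,1,i,1)$ that forces each convolution to concentrate on a single power of $r$. Everything else—the basis comparison and the $4\times4$ determinant—is routine bookkeeping.
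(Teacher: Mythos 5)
Your proposal is correct: I checked the convolution computations ($u^2=v^2=r^2$, $uv=vu^3=r^3$), the matching of the remaining basis images, and the $4\times 4$ determinant, and everything holds. The verification kernel is the same as the paper's --- the paper also ends by checking $\psi(a)^2=\psi(a^2)$, $\psi(b)^2=\psi(a^2)$, $\psi(a)\psi(b)=\psi(ab)$ and $\psi(ab)\psi(a)=\psi(b)$ by brute-force expansion in $\mathbb{C}[D_8]$ --- but your organization differs in two useful ways. First, the paper devotes most of its proof to \emph{deriving} the formula: it matches the primitive central idempotents of the two algebras, deduces ${\bf 1}\mapsto{\bf 1}$, $a^2\mapsto r^2$ and the images of the sums $a+a^3$, $b+a^2b$, $ab+a^3b$, then posits an ansatz with twelve unknowns $k_1,\dots,k_{12}$ and solves a system of quadratic equations; you skip this discovery phase entirely and verify the stated map directly via the universal property of the group algebra, using the identity $(r^js)(r^ks)=r^{j-k}$ to reduce all products to cyclic convolutions of the coefficient vectors $(1,-i,1,i)$ and $(-i,1,i,1)$, which is tidier than the sixteen-term expansions in the paper. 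Second, you supply an explicit bijectivity argument (images of the basis split across the rotation and reflection subspaces, plus a nonzero determinant), which the paper leaves implicit --- it only remarks beforehand that $\mathbb{C}[Q_8]\cong\mathbb{C}[D_8]$ by Wedderburn theory and never checks that \emph{this particular} homomorphism is invertible. So your route proves strictly what the theorem asserts, more economically, at the cost of not explaining where the formula comes from or why it is compatible with the idempotent correspondence the paper exploits afterwards.
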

\begin{proof}
We note that the generalized quaternion group $Q_{4m}$ and the dihedral group $D_{2n}$ have the same character table when $n=2m$ and $2\,|\,m$. In particular, the smallest case $Q_{8}$ and $D_{8}$ have the same values in the first column, and consequently $\mathbb{C}[Q_{8}]\cong \mathbb{C}[D_{8}]$ as algebras by Lemma~\ref{lem:wed}.

The primitive central idempotents corresponding to the two-dimensional irreducible representations of $\mathbb{C}[Q_{8}]$ and $\mathbb{C}[D_{8}]$ respectively are
$$\frac{1}{2}({\bf1}-a^2),\quad\frac{1}{2}({\bf1}-r^2).$$
Under any algebra isomorphism from $\mathbb{C}[Q_{8}]$ to $\mathbb{C}[D_{8}]$, we must have
$${\bf 1}\mapsto{\bf 1},\quad a^2\mapsto r^2.$$

On the other hand, by Prop.~\ref{prop:gqg-idem} all primitive central idempotents corresponding to the four one-dimensional representations of $\mathbb{C}[Q_{8}]$ are as follows:
$$e_{1}=\frac{1}{8}({\bf1}+a^2+a+a^3+b+a^2b+ab+a^3b),$$
$$e_{2}=\frac{1}{8}({\bf1}+a^2+a+a^3-b-a^2b-ab-a^3b),$$
$$e_{3}=\frac{1}{8}({\bf1}+a^2-a-a^3+b+a^2b-ab-a^3b),$$
$$e_{4}=\frac{1}{8}({\bf1}+a^2-a-a^3-b-a^2b+ab+a^3b).$$
That is,
$$\left(
  \begin{array}{c}
    e_{1} \\
    e_{2} \\
    e_{3} \\
    e_{4}\\
  \end{array}
\right)=\frac{1}{8}\left(
                \begin{array}{cccc}
                  1 &  1 & 1 & 1 \\
                  1 &  1 & -1 & -1 \\
                  1 &  -1 & 1 & -1 \\
                  1 & -1 &  -1 &  1 \\
                \end{array}
              \right)\left(
                        \begin{array}{c}
                          {\bf1}+a^2 \\
                          a+a^3 \\
                          b+a^2b \\
                          ab+a^3b \\
                        \end{array}
                      \right).$$
By Prop.~\ref{prop:di-idem}, all primitive central idempotents corresponding to the four one-dimensional representations of $\mathbb{C}[D_{8}]$ are as follows:
$$e_{1}=\frac{1}{8}({\bf1}+r+r^2+r^3+s+rs+r^2s+r^3s),$$
$$e_{2}=\frac{1}{8}({\bf1}-r+r^2-r^3+s-rs+r^2s-r^3s),$$
$$e_{3}=\frac{1}{8}({\bf1}-r+r^2-r^3-s+rs-r^2s+r^3s),$$
$$e_{4}=\frac{1}{8}({\bf1}+r+r^2+r^3-s-rs-r^2s-r^3s).$$
Namely,
$$\left(
  \begin{array}{c}
    e_{1} \\
    e_{2} \\
    e_{3} \\
    e_{4}\\
  \end{array}
\right)=\frac{1}{8}\left(
                \begin{array}{cccc}
                  1 &  1 & 1 & 1 \\
                  1 &  1 & -1 & -1 \\
                  1 &  -1 & 1 & -1 \\
                  1 & -1 &  -1 &  1 \\
                \end{array}
              \right)\left(
                        \begin{array}{c}
                          {\bf1}+r^2 \\
                          s+r^2s \\
                          rs+r^3s \\
                          r+r^3 \\
                        \end{array}
                      \right).$$
Therefore, we can require that our desired algebra isomorphism $\psi:\mathbb{C}[Q_{8}]\rightarrow\mathbb{C}[D_{8}]$ satisfies
\begin{align*}
ab+a^3b&\mapsto r+r^3,\\
a+a^3&\mapsto s+r^2s,\\
b+a^2b&\mapsto rs+r^3s.
\end{align*}

Furthermore, since $\psi(\xi_{1}\xi_{2})=\psi(\xi_{1})\psi(\xi_{2})$ for any $\xi_{1},\xi_{2}\in \mathbb{C}[Q_{8}]$, the map $\psi$ also satisfies:
\begin{align*}
\psi(abab)&=\psi(b^2)=\psi(a^2)=r^2=\psi(ab)^{2},\\
\psi(ab+a^3b)&=\psi(ab)({\bf 1}+r^2)=r+r^3=r({\bf 1}+r^2).
\end{align*}
That is,
\begin{equation*}
\begin{cases}
\psi(ab)^2=r^2,\\
\psi(ab)-r\in({\bf 1}-r^2),
\end{cases}
\end{equation*}
as the principal ideal $({\bf 1}-r^2)$ is the annihilator of ${\bf 1}+r^2$. Similarly,
\begin{equation*}
\begin{cases}
\psi(a)^2=r^2,\\
\psi(a)-s\in ({\bf 1}-r^2),
\end{cases}\quad
\begin{cases}
\psi(b)^2=r^2,\\
\psi(b)-rs\in ({\bf 1}-r^2).
\end{cases}
\end{equation*}
Therefore, we can set
\begin{equation*}
\begin{cases}
\psi(ab)=r+(k_{1}\cdot {\bf 1}+k_{2}\cdot r+k_{3}\cdot rs+k_{4}\cdot s)({\bf 1}-r^2),\\
\psi(a)=s+(k_{9}\cdot {\bf 1}+k_{10}\cdot r+k_{11}\cdot rs+k_{12}\cdot s)({\bf 1}-r^2),\\
\psi(b)=rs+(k_{5}\cdot {\bf 1}+k_{6}\cdot r+k_{7}\cdot rs+k_{8}\cdot s)({\bf 1}-r^2),
\end{cases}
\end{equation*}
with $k_1,\dots,k_{12}\in \mathbb{C}$, and obtain the following system of equations,
\begin{equation*}
\begin{cases}k_{1}k_{3}=0, k_{1}k_{4}=0,\\4k_{1}k_{2}+2k_{1}=0,\\2k^{2}_{1}+2k^{2}_{3}+2k^{2}_{4}-2k^{2}_{2}-2k_{2}=0;\\
k_{5}k_{6}=0, k_{5}k_{8}=0,\\4k_{5}k_{7}+2k_{5}=0,\\2k^{2}_{5}+2k^{2}_{7}+2k^{2}_{8}-2k^{2}_{6}+1+2k_{7}=0;\\
k_{9}k_{10}=0, k_{9}k_{11}=0,\\4k_{9}k_{12}+2k_{9}=0,\\2k^{2}_{9}+2k^{2}_{11}+2k^{2}_{12}-2k^{2}_{10}+1+2x_{12}=0.
\end{cases}
\end{equation*}
Note that there is more than one solution for this system of equations, and any one of these solutions must also satisfy:
\begin{equation*}
\begin{cases}
\psi(a)\psi(b)=\psi(ab),\\
\psi(b)\psi(ab)=\psi(a),\\
\psi(ab)\psi(a)=\psi(b).
\end{cases}
\end{equation*}
However, these three additional equalities fail to hold simultaneously for any solution in which $k_{1}, k_{5}$ and $k_{9}$ are not all zero. Instead, we find the  solution below satisfying all these equations:
\begin{align*}
&k_{1}=k_{3}=k_{4}=0,\quad k_{2}=-1,\quad k_{5}=k_{6}=0,\\
&k_{7}=-\frac{1}{2},\quad k_{8}=-\frac{i}{2},\quad k_{9}=k_{10}=0,\quad k_{11}=-\frac{i}{2},\quad k_{12}=-\frac{1}{2}.
\end{align*}
That is,
\begin{align*}
ab&\mapsto r^3,\\
a&\mapsto \frac{1}{2}(r^2s+s-i\cdot rs+i\cdot r^3s),\\
b&\mapsto \frac{1}{2}(rs+r^3s-i\cdot s+i\cdot r^2s).
\end{align*}
Then
\begin{align*}
a^3b&\mapsto r,\\
a^2b&\mapsto \frac{1}{2}(rs+r^3s+i\cdot s-i\cdot r^2s),\\
a^3&\mapsto \frac{1}{2}(r^2s+s+i\cdot rs-i\cdot r^3s).
\end{align*}
Now we specifically verify that the stated linear map $\psi:\mathbb{C}[Q_{8}]\rightarrow\mathbb{C}[D_{8}]$ is an algebra isomorphism as desired:
\begin{align*}
       \psi(a)^2&=\frac{1}{4}(r^2s+s-i rs+i r^3s)^2\\
       &=\frac{1}{4}({\bf1}+r^2-ir+ir^3+r^2+{\bf1}-ir^3+ir-ir^3-ir-{\bf1}+r^2+ir+ir^3+r^2-{\bf1})\\
       &=r^2=\psi(a^2);\\
       \psi(a)^3&= \psi(a^2)\psi(a)=r^2 \cdot\frac{1}{2}(r^2s+s-irs+ir^3s)= \frac{1}{2}(r^2s+s+i rs-i r^3s) =\psi(a^3);\\
       \psi(a)^4&= \psi(a^2)^2 = (r^2)^2=r^4={\bf1};\\
       \psi(a)\psi(b)&=\frac{1}{4}(r^2s+s-irs+ir^3s)(rs+r^3s-is+ir^2s)\\
       &=\frac{1}{4}(r+r^3-ir^2+i{\bf1}+r^3+r-i{\bf1}+ir^2-i{\bf1}-ir^2-r+r^3+ir^2+i{\bf1}+r^3-r)\\
       &=r^3=\psi(ab);\\
       \psi(a)^2\psi(b)&=\psi(a^2)\psi(b)=r^2\cdot\frac{1}{2}(rs+r^3s-is+ir^2s)=\frac{1}{2}(r^3s+rs-ir^2s+is)=\psi(a^2b);\\
       \psi(a)^3\psi(b)&=\psi(a^2)\psi(ab)= r^2\cdot r^3= r =\psi(a^3b);\\
       \psi(b)^2&= \frac{1}{4}(rs+r^3s-is+ir^2s)^2\\
       &=\frac{1}{4}({\bf1}+r^2-ir+ir^3+r^2+{\bf1}-ir^3+ir-ir^3-ir-{\bf1}+r^2+ir+ir^3+r^2-{\bf1})\\
       &=r^2=\psi(a^2);\\
       \psi(a)\psi(b)\psi(a)&= \psi(ab)\psi(a) = r^3\cdot\frac{1}{2}(r^2s+s-i rs+i r^3s) = \frac{1}{2}(rs+r^3s-is+ir^2s) = \psi(b).\qedhere
\end{align*}
%
\end{proof}

According to Prop.~\ref{prop:di-idem}, Theorem~\ref{th:di-idem} and Prop.~\ref{prop:gqg-idem}, Theorem~\ref{th:gqg-idem}, we have two complete sets of primitive orthogonal idempotents of $\CC[Q_{8}]$ and $\CC[D_{8}]$ respectively.
There are primitive idempotents $e_1,\dots,e_4$ corresponding to one-dimensional irreducible representations of $Q_{8}$, and
Example~\ref{exQ8} has calculated the primitive decomposition of idempotents which is given by $\rho_1$ for the unique two-dimensional irreducible representation of $Q_{8}$ up to equivalence
\begin{align*}
e_{\rho_1}&=\frac{1}{2}({\bf1}-a^2)=e'_{\rho_1}+e''_{\rho_1},\\
e'_{\rho_1}&=-\frac{1}{4i}(a^3+i\cdot a^2-a-i\cdot{\bf1}),\\
e''_{\rho_1}&=-\frac{1}{4i}(-a^3+i\cdot a^2+a-i\cdot{\bf1}).
\end{align*}

There are primitive idempotents $\bar{e}_1,\dots,\bar{e}_4$ corresponding to one-dimensional irreducible representations of $D_{8}$, and we see Example~\ref{exD8} know that
the unique two-dimensional irreducible representation $\rho_1$ of $D_{8}$ up to equivalence provides
\begin{align*}
\bar{e}_{\rho_1}&=\frac{1}{2}({\bf1}-r^2)=\bar{e}'_{\rho_1}+\bar{e}''_{\rho_1},\\
\bar{e}'_{\rho_1}&=\frac{1}{4}({\bf 1}-r^2+rs-r^3s),\\
\bar{e}''_{\rho_1}&=\frac{1}{4}({\bf 1}-r^2-rs+r^3s).
\end{align*}
Here we use bar notation to distinguish the complete set of primitive orthogonal idempotents of $\CC[Q_{8}]$ from that of $\CC[D_{8}]$.

The proof of Theorem~\ref{th:alg-isom} has shown that $\psi(e_i)=\bar{e}_i$ for $1\leq i\leq 4$. Now we further check that
\begin{align*}
           \psi(e'_{\rho_1}) & =\psi\left(-\frac{1}{4i}(a^3+i\cdot a^2-a-i\cdot{\bf1})\right) \\
           &=\frac{1}{4}({\bf 1}-r^2-rs+r^3s)=\bar{e}''_{\rho_1},\\
           \psi(e''_{\rho_1}) & =\psi\left(-\frac{1}{4i}(-a^3+i\cdot a^2+a-i\cdot{\bf1})\right)\\
           &=\frac{1}{4}({\bf 1}-r^2+rs-r^3s)=\bar{e}'_{\rho_1}.
         \end{align*}

\noindent
{\bf Question.} In general, we wonder how to find algebra isomorphisms between $\mathbb{C}[Q_{4m}]$ and $\mathbb{C}[D_{2n}]$ when $n=2m$ and $2\,|\,m$, making a one-to-one correspondence between the two complete sets of their primitive orthogonal idempotents given in this paper.

\smallskip\noindent
{\bf Acknowledgments.}
This work is supported by National Natural Science Foundation of China (Grant No. 12071094). We would like to thank professor Victor Bovdi for helpful comments.

\bibliographystyle{amsplain}

\begin{thebibliography}{99}

\bibitem {B1}  S. D. Berman, Group algebras of abelian extensions of finite groups, (Russian) \emph{Dokl. Akad. Nauk
SSSR (N.S.)} {\bf102} (1955),431--434.

\bibitem {B2}  S. D. Berman, Representations of finite groups over an arbitrary field and over rings of integers, \emph{Izv. Akad. Nauk SSSR Ser. Mat.} {\bf 30} (1966), 69--132.

\bibitem{BKP} G. K. Bakshi, R. S. Kulkarni and I. B. S. Passi, The rational group algebra of a finite group, \emph{J. Algebra Appl.} {\bf12} (2013).

\bibitem{CB}C. Bagi\'nski, Modular group algebras of 2-groups of maximal class, \emph{Comm. Algebra} {\bf20} (1992), 1229--1241.


\bibitem{Co} D. B. Coleman, Finite groups with isomorphic group algebras, \emph{Trans. Amer. Math. Soc.} {\bf105} (1962), 1--8.

\bibitem{FF} B. L. Macedo Ferreira, R. Nascimento Ferreira, The Wedderburn $b$-decomposition for alternative baric algebras. \emph{Mitt. Math. Ges. Hamburg.} {\bf37} (2017), 13--25.

\bibitem{GY} Y. Gao, Q. Yue, Idempotents of generalized quaternion group algebras and their applications, \emph{Discrete Mathematics}. {\bf344} (2021).


\bibitem{He} A. Herman, On the automorphism groups of rational group algebras of metacyclic groups, \emph{Comm. Algebra} {\bf25} (1997), 2085--2097.

\bibitem{JL} G. D. James, M. W. Liebeak, Representations and characters of groups. 2 Eds. New York: Cambridge University, 2001.

\bibitem{JeL} E. Jespers, G. Leal, Generators of large subgroups of the unit group of integral group rings, \emph{ Manuscripta Math.} {\bf78} (1993), 303--315.


\bibitem{JR1} E. Jespers, \'A. del R\'io, A structure theorem for the unit group of the integral group ring of some finite groups, \emph{J. Reine Angew. Math.} {\bf521} (2000), 99--117.

\bibitem{JR2} E. Jespers, \'A. del R\'io, Group Ring Groups. Vol. 1. Orders and Generic Constructions of Units, De Gruyter Graduate, De Gruyter, Berlin , 2016.

\bibitem{JOR} E. Jespers, G. Olteanu, \'A. del R\'io, Rational group algebras of finite groups: from idempotents to units of integral group rings, \emph{Algebr. Represent. Theory} {\bf 15} (2012), 359--377.

\bibitem{JORV} E. Jespers, G. Olteanu, \'A. del R\'io, and I. Van Gelder, Group rings of finite strongly monomial group: central units and primitive idempotents. \emph{J. Algebra} {\bf387} (2013), 99--116.

\bibitem{ILPSSZ}I. M. Isaacs, A. I. Lichtman, D. S. Passman, S.K. Sehgal, N. J. A. Sloane, H. J. Zassenhaus, Representation Theory, Group Rings, and Coding Theory: Papers in Honor of S. D. Berman. Contemporary Mathematics, vol. {\bf 93}. American Mathematical Society, Providence, 1989.


\bibitem{Ma} F. E. Brochero Mart\'inez, Structure of finite dihedral group algebra, \emph{Finite Fields Appl.} {\bf5} (2015), 204--214.


\bibitem{OG} G. Olteanu, I. Van Gelder, Finite group algebras of nilpotent groups: A complete set of orthogonal primitive idempotents, \emph{Finite Fields Appl.} {\bf 17} (2011), 157--165.

\bibitem{ORS1} A. Olivieri, \'A. del R\'io and J. J. Sim\'on, On monomial characters and central idempotents of rational group algebras, \emph{Comm. Algebra} {\bf32} (2004), 1531--1550.

\bibitem{ORS2} A. Olivieri, \'A. del R\'io and J. J. Sim\'on, The group of automorphisms of the rational group algebra of a finite metacyclic group, \emph{Comm. Algebra} {\bf34} (2006), 3543--3567.

\bibitem{RS} J. Ritter, S. K. Sehgal, Construction of units in integral group rings of finite nilpotent groups, \emph{Trans. Amer. Math. Soc.} {\bf324} (1991), 603--621.

\bibitem{S} J.-P. Serre, Linear Representations of Finite Groups, Springer, New York, 1977.

\bibitem{TY} D. Tambara, S. Yamagami, Tensor categories with fusion rules of self-duality for finite abelian groups. \emph{J. Algebra} {\bf209} (1998), 692--707.

\bibitem{Ve} C. R. Giraldo Vergara, Wedderburn decomposition of small rational group algebras, in: Groups, Rings and Group Rings, in: \emph{Lect. Notes Pure Appl. Math.} {\bf248} (2006), 191--200.

\bibitem{VM} C. R. Giraldo Vergara, F. E. Brochero Mart\'inez, Wedderburn decomposition of some special rational group algebras, \emph{Lect. Mat.} {\bf23} (2002), 99--106.

\bibitem{We} P. Webb, A Course in Finite Group Representation Theory, Cambridge Studies in Advanced Mathematics, {\bf161}, Cambridge University Press, Cambridge, 2016.


\end{thebibliography}

\end{document}